\newtheorem{theorem}{Theorem}[section]
\newtheorem{proposition}[theorem]{Proposition}
\newtheorem{lemma}[theorem]{Lemma}
\newtheorem{observation}[theorem]{Observation}
\newtheorem{conjecture}[theorem]{Conjecture}
\newtheorem{claim}[theorem]{Claim}
\newtheorem{question}[theorem]{Question}
\theoremstyle{definition}
\theoremstyle{definition}
\newtheorem*{alg}{The algorithm}
\theoremstyle{remark}
\newtheorem*{remarks}{Remarks}
\newcommand{\eps}{\varepsilon}
\DeclareMathOperator{\bp}{bp}
\DeclareMathOperator{\VC}{VC}
\DeclareMathOperator{\diam}{diam}
\newcommand\restr[2]{{% we make the whole thing an ordinary symbol
  \left.\kern-\nulldelimiterspace % automatically resize the bar with \right
  #1 % the function
 % \vphantom{\big|} % pretend it's a little taller at normal size
  \right|_{#2} % this is the delimiter
  }}
\newcommand{\mF}{\mathcal{F}}
\newcommand{\mH}{\mathcal{H}}
\newcommand{\mC}{\mathcal{C}}
\newcommand{\topo}{\mathrm{top}}
\newcommand{\NN}{\mathbb{N}}
\newcommand{\RR}{\mathbb{R}}
\newcommand{\mV}{\mathcal{V}}
\newcommand{\mU}{\mathcal{U}}
\newcommand{\mW}{\mathcal{W}}
\newcommand{\mG}{\mathcal{G}}
\newcommand{\mB}{\mathcal{B}}
\begin{document}
\title{Variants of VC dimension and their applications to dynamics}

\author[Guorong Gao]{Guorong Gao}
\address{School of Mathematics and Statistics, Fuzhou University, Fuzhou, China}
\email{grgao@fzu.edu.cn}

\author[Jie Ma]{Jie Ma}
\address{School of Mathematical Sciences, University of Science and Technology of China, Hefei, China, and Yau Mathematical Sciences Center, Tsinghua University, Beijing, China}
\email{jiema@ustc.edu.cn}

\author[Mingyuan Rong]{Mingyuan Rong}
\address{School of Mathematical Sciences, University of Sciences and Technology of China, Hefei,
China}
\email{rong\_ming\_yuan@mail.ustc.edu.cn}
\author[Tuan Tran]{Tuan Tran}
\address{School of Mathematical Sciences, University of Sciences and Technology of China, Hefei,
China.}
\email{trantuan@ustc.edu.cn}

\date{}
\begin{abstract}
{Since its introduction by Vapnik and Chervonenkis in the 1960s, the VC dimension and its variants have played a central role in numerous fields. In this paper, we investigate several variants of the VC dimension and their applications to dynamical systems. First, we prove a new bound for a recently introduced generalization of VC dimension, which unifies and extends various extremal results on the VC, Natarajan, and Steele dimensions. This new bound allows us to strengthen one of the main theorems of Huang and Ye [Adv. Math., 2009] in dynamical systems. Second, we refine a key lemma of Huang and Ye related to a variant of VC dimension by providing a more concise and conceptual proof. We also highlight a surprising connection among this result, combinatorics, dynamical systems, and recent advances in communication complexity.}

\end{abstract}

\maketitle

\section{Introduction}

The Vapnik--Chervonenkis dimension \cite{VC68,VC71}, or VC dimension, is a combinatorial parameter of significant importance in various fields, including statistical learning theory \cite{VC71,BEHW89}, probability \cite{Ver18}, functional analysis \cite{Tal94}, discrete and computational geometry \cite{CW89,HW87}, model theory \cite{She72} and combinatorics \cite{DSW94}. The VC dimension of a family of binary vectors $\mH\subseteq \{0,1\}^n$ is the maximum size of a set \textbf{shattered} by the family, i.e., a set $S\subseteq [n]$ such that the projection of $\mH$ onto the coordinates of $S$ equals $\{0, 1\}^S$. A cornerstone result in this area is the Sauer–Shelah Lemma \cite{Sau72,She72,VC71}, which states that any family $\mH\subseteq \{0,1\}^n$ with VC dimension $d$ satisfies $|\mH|\le \sum_{i=0}^d\binom{n}{i}$. This bound is tight, as exemplified by the family of all binary vectors of length $n$ with at most $d$ ones. Various proofs of the Sauer-Shelah lemma can be found in the literature, and numerous variants of the lemma are known (see, e.g., \cite{Ste78, FS12, DSS18, Nat89, AHHM21}).

In this paper, we study a generalization of the VC dimension recently introduced in computer science and game theory, a variant of the VC dimension applied to partial concept classes, and their implications in dynamical systems.

\subsection{A generalization of VC dimension: \texorpdfstring{$k$}{k}-Natarajan dimension}

The concept of \textbf{$k$-Natarajan dimension} was introduced recently and has found applications in computer science, game theory \cite{DSS18}, and machine learning \cite{CP23}. 
For $r \geq k \geq 2$ and $n \geq 1$, the {$k$-Natarajan dimension} of a family $\mH \subseteq \{1, \ldots, r\}^{[n]}$, denoted by $\dim_k(\mH)$, is defined as the maximum size of a subset $S \subseteq [n]$ such that the projection of $\mH$ onto $S$ contains a subfamily of the form $\prod_{i \in S} Y_i$, where each $Y_i$ is a $k$-element subset of $\{1, \ldots, r\}$. When $r = k = 2$, this definition coincides with the VC dimension. For $k = 2$, it corresponds to the Natarajan dimension, and when $r = k$, it equals the Steele dimension. Our first result extends the Sauer--Shelah Lemma from the VC dimension to $k$-Natarajan dimension.

\begin{theorem}\label{thm:Natarajan}
Let $r\ge k \ge 2$ and $n\ge d\ge 1$. For any family $\mH\subseteq \{1,\ldots,r\}^{[n]}$ with $\dim_k(\mH) \le d$,
\begin{equation*}\label{eq:size}
|\mH|\le (k-1)^{n-d}\sum_{i=0}^d \binom{n-i-1}{d-i}\binom{r}{k}^{d-i}r^{i}.
\end{equation*}
\end{theorem}

\begin{remarks}\textcolor{white}{}
\begin{itemize}[topsep=0pt,parsep=0pt,partopsep=0pt]
     \item[(i)] \cref{thm:Natarajan} gives $|\mH|=O_{r,k,d}(n^d(k-1)^n)$. This bound is asymptotically tight, as shown by the family consisting of all vectors in which at most $d$ coordinates are greater than $k-1$. Moreover, the bound is sharp whenever $r=k\ge 2$ (see (iii) and (v) below).
     \item[(ii)] Weaker bounds were obtained by Daniely, Schapira and Shahaf \cite[Theorem 1.5]{DSS18} and Charikar and Pabbaraju \cite[Theorem 7]{CP23} using a different method.

    \item[(iii)] \cref{thm:Natarajan} recovers the classic Sauer--Shelah Lemma \cite{VC71,Sau72,She72} by taking $r=k=2$ and noting that
    $\sum_{i=0}^d\binom{n-i-1}{d-i}2^i=\sum_{i=0}^d\binom{n}{i}$.
    \item[(iv)] Specializing \cref{thm:Natarajan} to $k = 2$ yields an improved version of the Natarajan theorem \cite{Nat89}.
    \item[(v)] For $r=k$,  
    \cref{thm:Natarajan} gives the same bound as in the Steele theorem \cite{Ste78}.
\end{itemize}
\end{remarks}

Our novel approach in the proof of \cref{thm:Natarajan} is to construct a bijection between $\mH$ and a classification table that captures the shattering relationship. This idea, inspired by \cite{FS12}, might be of independent interest. Clearly, \cref{thm:Natarajan} has broad applicability, since it unifies and enhances various extremal results on the VC dimension. As expected, in \cref{thm:dim0}, we apply \cref{thm:Natarajan} to dynamical systems, improving one of the main theorems of Huang and Ye \cite{HY09}. To the best of our knowledge, this is the first application of the $k$-Natarajan dimension to topological dynamics.

\subsection{A variant of VC dimension to partial concept classes}

Given integers $r\ge 2$ and $n\ge 1$, we consider partial concept classes $\mH \subseteq \{1,\ldots,r,\star\}^{[n]}$, where each $h\in \mH$ is a partial vector; specifically if $i \in [n]$ is such that $h(i)=\star$ then $h$ is undefined at $i$.
The (non-traditional) {\bf VC dimension} of $\mH$, denoted $\dim_{\VC}(\mH)$, is the maximum size of a shattered set $S\subseteq Z$, where $S$ is said to be {\bf shattered} if  
the projection of $\mH$ onto $S$ contains $\{1,\ldots,r\}^S$. A 
family $\mF \subseteq \{1,\ldots,r\}^{[n]}$ is called a {\bf net} of $\mH$ if for every $h\in \mH$ there exists $f\in \mF$ such that
$h(i)\in \{1,\ldots,r,\star\}\setminus\{f(i)\}$ for all
$i\in [n]$. The smallest possible size of a net of $\mH$ is the {\bf covering number} of $\mH$, denoted $C(\mH)$. 

To investigate maximal pattern entropy of a topological dynamical system (the notations for which will be introduced in \cref{susec:topo}), Huang and Ye \cite{HY09} provide a crucial lemma showing that for sufficiently large $n$, if $\mH\subseteq \{1,\ldots,r,\star\}^{[n]}$ is a partial concept class with $\VC(\mH) \le d$, then
\[
C(\mH) \le r^2 2^m\left(\frac{n}{m}\right)^{2m},
\]
where $m:=\log_{\tfrac{r+1}{r}} \binom{n}{\le d}+1$ and $\binom{n}{\le d}:=\binom{n}{0}+\binom{n}{1}+\ldots+\binom{n}{d}$.
In the following theorem, we strengthen this result and provide a more concise and conceptual proof.
\begin{theorem}
\label{thm:comb}
Let $r \ge 2$ and $n\ge d\ge 1$. If $\mH \subseteq {\{1,\ldots,r,\star\}}^{[n]}$ is a partial concept class with $\VC(\mH) \le d$, then
\begin{equation}\label{eq:covering number}
C(\mH) \le \binom{n}{\le\log_{\tfrac{r}{r-1}} \binom{n}{\le d}}.
\end{equation}
\end{theorem}

Earlier versions of this lemma (see \cite{VC71,Sau72,She72,KM78,KL07}) play a significant role in all aspects of local entropy theory for topological dynamical systems. For a thorough discussion we refer the reader to the survey by Glasner and Ye \cite{GY09}, and Chapter 12 of the book by Kerr and Li \cite{KL16}.
The case $r=2$ of \cref{thm:comb} is a result of Alon, Hanneke, Holzman and Moran \cite[Theorem 12]{AHHM21} in their study of PAC learning theory. Our proof extends their ideas.
    
When $r=2$ and $d=1$, the inequality \eqref{eq:covering number} gives $C(\mH)\leq \binom{n}{\le\log (n+1)}\leq n^{\log( n+1)}$. More generally, one can show that the %RHS 
right-hand side of the inequality \eqref{eq:covering number} is at most $n^{(r-1)d\log (n+1)}$. Note also that for $r=O(1)$ and $d=o(n)$, we have $\log_{\tfrac{r}{r-1}} \binom{n}{\le d}=o(n)$ since $\binom{n}{\le o(n)}=2^{o(n)}$, which implies $C(\mathcal{H})\le \binom{n}{\le o(n)}=2^{o(n)}$. 
\vskip 0.3cm
Another result of Alon, Hanneke, Holzman and Moran \cite[Theorem 11]{AHHM21} shows that the inequality \eqref{eq:covering number} is nearly tight when $r=2$ and $d=O(1)$. Interestingly, its proof hinges on a recent breakthrough in communication complexity and its implications in graph theory by Balodis, Ben-David, G\"o\"os, Jain and Kothari \cite{BBGJK21}.

\begin{theorem}[\cite{AHHM21}]
\label{thm:lower-bound}
There is a partial concept class $\mH\subseteq\{1,2,\star\}^{[n]}$ with $\VC(\mH)\le 1$ and $C(\mH) \ge n^{(\log  n)^{1-o(1)}}$.
\end{theorem}

Since \cref{thm:lower-bound} was stated in a different form in \cite{AHHM21}, %for the sake of completeness we include its proof in \cref{sec:lower-bound}.
we include its proof in \cref{sec:lower-bound} for completeness.

\subsection{Applications to dynamical systems}\label{susec:topo}
The central object of study in topological dynamics is a topological dynamical system (TDS) $(X,T)$, where $X$ is a nonempty compact metrisable space and $T\colon X\rightarrow X$ is a continuous map. Ever since Adler, Konheim and McAndrew \cite{AKM65} introduced entropy into dynamical systems theory 60 years ago, it has played a very central role in the study of topological dynamical systems (see, for example, the surveys \cite{HK02,Kat23}). They associated to any topological dynamical system $(X,T)$ a topological invariant $h_{\topo}(T) \in \RR_{+} \cup \{\infty\}$, called the {\bf topological entropy} of $(X,T)$, which measures the uncertainty or disorder of the system. Systems with positive topological entropy are random in certain sense, and systems with zero topological entropy are said to be {\bf deterministic} even though they may exhibit complicated behaviours. Comparing to positive entropy systems, deterministic systems are much less understood. In order to distinguish between deterministic systems, Huang and Ye \cite{HY09} introduced the concept of maximal pattern entropy $h^*_{\topo}(T)$ of a topological dynamical system $(X,T)$. It is known that both $h_{\topo}(T)$ and $h^*_{\topo}(T)$ take value in $\{0,\log 2, \log 3, \ldots\} \cup \{\infty\}$, and that $h_{\topo}(T)>0$ implies $h^*_{\topo}(T)=\infty$ (see \cite{AKM65,HY09}). Hence maximal pattern entropy is especially useful for deterministic systems.

We now proceed to give more details.
Let $(X,T)$ be a TDS. Given two finite open covers $\mU,\mV$ (of $X$), their {\bf joint} is defined as
$\mU\vee\mV:=\{A \cap B\colon A\in \mU, B\in \mV\}$.
Clearly, $\mU\vee\mV$ is also an open cover of size at most $|\mU||\mV|$, and at least $1$. For a finite open cover $\mU$, let $p^*_{X,\,\mU}\colon \NN \rightarrow \NN$ be the function given by
\[
p^*_{X,\,\mU}(n)=\max_{S \subset \NN \cup \{0\},\, |S|=n} N\big(\bigvee_{i\in S} T^{-i} \mU\big),
\]
where $N(\mV)$ denotes the minimum size of a subcover chosen from an open cover $\mV$.
It is easy to see that the sequence $\{\log p^*_{X,\,\mU}(n)\}_{n\in \NN}$ is sub-additive, and thus by Fekete's lemma, the limit $
\lim\limits_{n \rightarrow+\infty} \frac{1}{n} \log p^*_{X,\, \mU}(n)$ exists.  Denote this limit by $h_{\topo}^*(T, \mU)$. The {\bf maximal pattern entropy} of $(X, T)$ is then defined as
\[
h_{\topo}^*(T):=\sup _{\mU}h_{\topo}^*(T, \mU),
\]
where the supremum is over all finite open covers $\mU$.\footnote{This definition is independent of a choice of metric.}

We say a topological dynamical system $(X,T)$ is {\bf null} if $h_{\topo}^*(T)=0$, that is, $h^*_{\topo}(T)$ attains the minimum possible
value. From the definition of $h^*_{\topo}(T)$, we find that $(X,T)$ is null if and only if $p^*_{X,\,\mU}(n)$ grows sub-exponentially in $n$ for each finite open cover $\mU$. For such systems, an intriguing conjecture of Huang and Ye \cite{HY09} further rules out the intermediate growth between polynomial and exponential.

\begin{conjecture}[\cite{HY09}]
\label{conj:null}
If $(X,T)$ is a null TDS, then $p^*_{X,\,\mU}$ is of polynomial order for each finite open cover $\mU$.
\end{conjecture}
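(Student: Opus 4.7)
The plan is to reduce \cref{conj:null} to a purely combinatorial dichotomy of Sauer--Shelah type and then invoke the strengthened lemma advertised in the abstract. Fix a finite open cover $\mU=\{U_1,\ldots,U_k\}$. For any finite $S\subset\NN\cup\{0\}$ and each $x\in X$, define the \emph{pattern} $\phi_{S,x}\colon S\to 2^{[k]}\setminus\{\emptyset\}$ by $\phi_{S,x}(i)=\{j: T^{i}x\in U_j\}$, and set $\mF_S=\{\phi_{S,x}:x\in X\}$. A short compactness argument should show that $p^*_{X,\mU}(n)$ and $f(n):=\max_{|S|=n}|\mF_S|$ are polynomially related, so it suffices to prove that $f$ sub-exponential forces $f(n)=n^{O(1)}$.

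Next I would apply the new Sauer--Shelah lemma promised in the abstract to the labelled family $\mF_S$. The idea is to define a ``combined'' dimension $d(\mU)$ that simultaneously generalises VC, Natarajan and Steele dimensions in a way adapted to set-valued functions on $S$, and to establish two things: (i) if $d(\mU)\geq d$, then $f(n)\geq c^n$ for some $c=c(d,k)>1$ when $n$ is large; and (ii) if $d(\mU)<\infty$, then $f(n)=O(n^{d(\mU)})$. The contrapositive of (i) combined with (ii) yields the polynomial bound. Shift-invariance of the construction and compactness of $X$ should let me work with a single dimension attached to the triple $(X,T,\mU)$.

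Translating a polynomial bound on $f$ back into a polynomial bound on $p^*_{X,\mU}$ is straightforward: for each pattern $\phi\in\mF_S$ pick a selector $j_\phi(i)\in\phi(i)$ for every $i\in S$ and form the basic open set $\bigcap_{i\in S}T^{-i}U_{j_\phi(i)}$; since every $x\in X$ realises some pattern, the collection of these $|\mF_S|$ basic sets is a subcover of $\bigvee_{i\in S}T^{-i}\mU$.

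The main obstacle is part (i) of the dichotomy for general, not necessarily zero-dimensional, TDS. In the zero-dimensional setting $\mU$ can be refined to a clopen partition, so $\phi_{S,x}$ becomes single-valued and the new Sauer--Shelah lemma applies directly; this is presumably the improvement that the paper achieves unconditionally. In the general case the overlaps of the $U_j$ make the pattern map genuinely set-valued, and the sub-exponential-to-polynomial jump must be established for this richer object. A plausible route is to pass to a zero-dimensional extension $\pi\colon(Y,T')\to(X,T)$ with $\pi^{-1}\mU$ clopen, apply the zero-dimensional result to $(Y,T')$, and push the polynomial bound down along $\pi$; controlling the blow-up in cover size under this extension is, I expect, the heart of any full resolution of \cref{conj:null}.
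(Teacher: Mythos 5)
You are trying to prove a statement that the paper does not prove and in fact argues is probably false: \cref{conj:null} is an open conjecture of Huang and Ye, the best known general bound is the quasi-polynomial one of \cref{thm:HY-general}, and the authors state explicitly (based on \cref{thm:lower-bound}) that they believe the conjecture fails. So no proposal along these lines can be ``essentially the paper's proof''; the only question is whether your sketch contains a viable new idea, and it does not. Every decisive step is deferred (``a short compactness argument should show'', ``a plausible route is''), and the steps that are spelled out have concrete problems. First, your reduction of $p^*_{X,\,\mU}(n)$ to $f(n)=\max_{|S|=n}|\mF_S|$ only works in one direction: the selector argument gives $p^*_{X,\,\mU}(n)\le f(n)$, but $f$ can be exponentially larger than $p^*_{X,\,\mU}$ (take $U_1=X$ and $U_2$ a proper open set: the minimal subcover has size $1$ while the exact-pattern class records membership in the sets $T^{-i}U_2$ and can be huge). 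This is precisely why Huang--Ye and this paper work with partial classes $\mH\subseteq\{1,\ldots,r,\star\}^{S}$ and the \emph{covering number} $C(\mH)$ (see Claim A.2 in the appendix, where $C(\mH)=N\big(\bigvee_{i\in S}T^{-i}\mV\big)$), not with the cardinality of the set-valued pattern family.

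Second, your proposed dichotomy (i)+(ii) --- ``bounded combined dimension forces polynomial growth'' --- is exactly what \cref{thm:lower-bound} rules out at the combinatorial level for general covers: there are partial classes with $\VC(\mH)\le 1$ whose covering number is $n^{(\log n)^{1-o(1)}}$, built from the counterexamples to the Alon--Saks--Seymour problem (\cref{thm:separation}, via \cref{lem:graph-to-class}). Hence any argument that factors through a Sauer--Shelah-type bound of the kind you describe cannot close the gap between $n^{O(1)}$ and $n^{O(\log n)}$ in the non-clopen setting. Third, the zero-dimensional extension route does not rescue this: while $N(\pi^{-1}\mV)=N(\mV)$ for covers pulled back along a surjective factor map, the cover $\pi^{-1}\mU$ on the extension $Y$ is open but not clopen, so you must refine it by a clopen cover of $Y$, and nullness of $(X,T)$ gives no control over the maximal pattern entropy of that refinement on $Y$ (extensions of null systems need not be null). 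If that passage worked, it would prove the conjecture outright, contradicting the evidence assembled in this paper; the correct attribution here is that \cref{thm:dim0} settles only the clopen-partition (zero-dimensional) case, and the general case remains open, with the paper conjecturing a quasi-polynomial lower bound (\cref{ques:quasi-poly}).
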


The conjecture was repeated in the survey on local entropy theory by Glasner and Ye \cite{GY09}. It was shown to be true for interval maps by Li \cite{Li11}, circle maps by Yang \cite{Yan21}, and most interestingly for zero-dimensional systems by Huang and Ye \cite{HY09}. A refinement of an open cover $\mU$ of $X$ is a new open cover $\mV$ of $X$ such that every set in $\mV$ is contained in some set in $\mU$. We say $X$ is {\bf zero-dimensional} if every finite open cover of $X$ has a clopen (closed and open) refinement. Using the Karpovsky--Milman Theorem \cite{KM78}, which is a generalization of the Sauer--Shelah Lemma, Huang and Ye \cite[Theorem 5.4]{HY09} verified \cref{conj:null} for such spaces $X$.

\begin{theorem}[\cite{HY09}]
\label{thm:HY-dim0}
Let $(X, T)$ be a TDS and let $\mU=\left\{U_1, \ldots, U_r\right\}$ be a clopen partition of $X$. Then $h_{\topo}^*(T, \mU)=\log \ell$ for some $\ell \in \{1,\ldots,r\}$. Moreover, one of the following alternatives holds.
\begin{itemize}
    \item[(a)] If $\ell=r$, then $p_{X,\, \mU}^*(n)=r^n$ for all $n \in \mathbb{N}$.
    \item[(b)] If $\ell= r-1$, then there exists $c>0$ such that $\ell^n \le p_{X,\, \mU}^*(n) \le n^c \ell^n$ for all $n \ge 2$.
    \item[(c)] If $2\le \ell \leq r-2$, then there exists $c>0$ such that $\ell^n \le p_{X,\, \mU}^*(n) \le  n^{c\log n} \ell^n$ for all $n \ge 2$.
    \item[(d)] If $\ell=1$, then there exists $c>0$ such that $ p_{X,\, \mU}^*(n) \le n^c$ for all $n \ge 2$.
\end{itemize}
\end{theorem}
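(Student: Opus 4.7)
The plan is to translate $p^*_{X,\mU}(n)$ into a combinatorial quantity—the shatter function of the itinerary class—and then invoke Sauer–Shelah type bounds adapted to the regime of $\ell$. Because $\mU=\{U_1,\dots,U_r\}$ is a clopen \emph{partition}, every point $x\in X$ has a unique itinerary $h_x\in \{1,\dots,r\}^{\NN\cup\{0\}}$ given by $h_x(i)=j$ iff $T^ix\in U_j$. Setting $\mH:=\{h_x:x\in X\}$, the joint $\bigvee_{i\in S}T^{-i}\mU$ is itself a clopen partition whose nonempty atoms correspond bijectively to the restrictions $\restr{\mH}{S}$, so
\[
p^*_{X,\mU}(n)=\max_{|S|=n}\lvert\restr{\mH}{S}\rvert,
\]
and the remainder of the proof estimates the right-hand side.

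Define $\ell\in\{1,\dots,r\}$ to be the largest integer such that some $\ell$-subset $A\subseteq\{1,\dots,r\}$ satisfies $A^S\subseteq \restr{\mH}{S}$ for arbitrarily large $S\subseteq\NN\cup\{0\}$. Pigeonhole on any single itinerary (some color appears infinitely often and its constant pattern on a finite subset lies in $\restr{\mH}{S}$) shows $\ell\geq 1$; trivially $\ell\leq r$. By restricting a witnessing $A$-shattered set to $n$ coordinates, $\lvert\restr{\mH}{S}\rvert\geq \ell^n$ for some $|S|=n$, producing the lower bounds in (a)--(c) and the inequality $h_{\topo}^*(T,\mU)\geq\log\ell$ in all four cases. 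The task is to prove matching upper bounds.

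Case (a) is immediate: $\ell=r$ means $\{1,\dots,r\}^{S^*}\subseteq \restr{\mH}{S^*}$ for arbitrarily large $S^*$, and restricting to any $n$-subset of such an $S^*$ yields $\lvert\restr{\mH}{S}\rvert=r^n$. In the remaining cases, maximality of $\ell$ supplies a constant $d<\infty$ such that for every $(\ell+1)$-subset $B\subseteq\{1,\dots,r\}$ and every finite $S$, $B^S\subseteq\restr{\mH}{S}$ forces $|S|\leq d$. When $\ell=1$ (Case (d)), $d$ is precisely the Natarajan dimension of $\mH$, and Natarajan's multi-class Sauer--Shelah bound gives $\lvert\restr{\mH}{S}\rvert\leq n^{c(d,r)}$. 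When $\ell=r-1$ (Case (b)), only the single subset $B=\{1,\dots,r\}$ is constrained, so $d$ is the Steele dimension and Steele's bound yields $\lvert\restr{\mH}{S}\rvert\leq \binom{n}{\leq d}(r-1)^n\leq n^c(r-1)^n$. Case (c) with $2\leq\ell\leq r-2$ is the richest: for each $(\ell+1)$-subset $B$, relabel the elements of $B$ (in any fixed order) as $1,\dots,\ell+1$ and the remaining colors as $\star$, producing a partial class $\mH_B\subseteq\{1,\dots,\ell+1,\star\}^S$ with $\VC(\mH_B)\leq d$. By \cref{thm:comb} (with $r$ there taken as $\ell+1$), $C(\mH_B)\leq n^{O(\log n)}$. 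Choosing a net $\mF_B$ of this size and pairing each $h\in\restr{\mH}{S}$ with a tuple $(f_B)_B$ certifying its membership, the total number of such tuples is at most $\prod_B C(\mH_B)\leq n^{O(\log n)}$ (only finitely many $B$'s, since $r$ is fixed). The certifying condition forces $h(i)\neq B_{f_B(i)}$ at every coordinate $i$ (where $B_k$ is the $k$-th element of $B$); a short pigeonhole argument shows that at each $i$ at most $\ell$ colors are consistent with all the $f_B$'s simultaneously—any $\ell+1$ consistent colors would themselves form a set $B^*$ contradicting $h(i)\neq (B^*)_{f_{B^*}(i)}$. Hence $\lvert\restr{\mH}{S}\rvert\leq n^{O(\log n)}\ell^n$.

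The main obstacle is the interlocking encoding in Case (c): one must argue uniformly across all $(\ell+1)$-subsets $B$ that the separate covers conspire to restrict the colors at each coordinate to just $\ell$ options, so that the correction factor $\ell^n$ (rather than $r^n$) appears. Once that pigeonhole is in place, Natarajan's and Steele's classical bounds dispose of the extreme values $\ell=1$ and $\ell=r-1$, and $\ell=r$ is trivial. As a by-product, the combined lower and upper bounds give $\ell^n\leq p^*_{X,\mU}(n)\leq n^{O(\log n)}\ell^n$, confirming $h_{\topo}^*(T,\mU)=\log\ell$ with $\ell\in\{1,\dots,r\}$, consistent with the known discreteness of maximal pattern entropy.
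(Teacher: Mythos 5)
Your proposal is correct, but it takes a genuinely different route from the paper. You essentially reconstruct Huang--Ye's original case analysis: after converting $p^*_{X,\,\mU}(n)$ into trace sizes of the itinerary class (this is \cref{lem:clopen-family}), you dispose of $\ell=r-1$ by Steele/Karpovsky--Milman, of $\ell=1$ by Natarajan, and of $2\le\ell\le r-2$ by the partial-class covering lemma \cref{thm:comb}, via the encoding with one net $\mF_B$ per $(\ell+1)$-subset $B$ of colours; your pigeonhole step is sound (the net condition is exactly $h(i)\neq B_{f_B(i)}$, and an $(\ell+1)$-set of simultaneously admissible colours would violate its own certificate), and $\VC(\mH_B)\le d$ is immediate because shattering in $\mH_B$ means $B^{S'}\subseteq\restr{\mH}{S'}$. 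The paper instead bypasses this trichotomy entirely: it proves the stronger \cref{thm:dim0} from the new $k$-Natarajan Sauer--Shelah bound (\cref{thm:Natarajan}), taking $\ell$ to be the largest $k$ for which the $k$-Natarajan dimension of the trace classes is unbounded; that single lemma gives $p^*_{X,\,\mU}(n)\le n^{O(1)}\ell^n$ uniformly for all $\ell\le r-1$, so your quasipolynomial factor in case (c) is improved to polynomial and cases (b)--(d) are handled by one argument, whereas your route only needs classical bounds plus \cref{thm:comb} and reproduces the weaker statement as given. One small repair: your $d$ is defined via shattering by a \emph{fixed} $(\ell+1)$-subset, so in case (d) it is not ``precisely'' the Natarajan dimension, which lets the $2$-element set vary with the coordinate; a pigeonhole over the $\binom{r}{2}$ possible pairs bounds the true Natarajan dimension by $\binom{r}{2}d$, which is all your application of Natarajan's theorem requires (case (b) is unaffected since the only $r$-subset is $\{1,\ldots,r\}$ itself).
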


Alternative (d) tells us that \cref{conj:null} does hold when $X$ is zero-dimensional. Indeed, let $\mU$ be any finite open cover of $X$. Then $\mU$ has a clopen refinement $\mV=\{V_1,\ldots,V_r\}$, which forms a partition of $X$. Since $\mV$ is a refinement of $\mU$, $p^*_{X,\,\mU}(n)\le p^*_{X,\,\mV}(n)$ for all $n$. Moreover, as $(X,T)$ is a null TDS, $h_{\topo}^*(T, \mU)=0$, and so it follows from \cref{thm:HY-dim0} (d) that
$p^*_{X,\, \mV}(n) \le n^{c}$ for $n\ge 2$. Therefore, $p^*_{X,\,\mU}(n)\le n^{c}$ for all $n\ge 2$, as desired.

Huang and Ye \cite{HY09} handled alternatives (b)--(d) separately.
Using our combinatorial lemma (i.e., \cref{thm:Natarajan}), we improve the multiplicative factor in (c) from quasipolynomial to polynomial in $n$, and provide a unified treatment of (b)--(d) as follows.

\begin{theorem}\label{thm:dim0}
Let $(X, T)$ be a TDS, and let $\mU=\left\{U_1, \ldots, U_r\right\}$ be a clopen partition of $X$. Then $h_{\topo}^*(T, \mU)=\log \ell$ for some $\ell \in \{1,\ldots,r\}$. Moreover, one of the following alternatives holds:
\begin{itemize}
  \item[(a)] If $\ell=r$, then $p_{X,\, \mU}^*(n)=r^n$ for all $n \in \mathbb{N}$.
  \item[(b)] If $\ell\le r-1$, then there exists $c>0$ such that $\ell^n \le p_{X,\, \mU}^*(n) \le n^c \ell^n$ for all $n \ge 2$.

\end{itemize}
\end{theorem}

For general topological spaces, the following result, due to Huang and Ye \cite[a special case of Theorem 4.5]{HY09},  
represents the current state of the art of \cref{conj:null}.

\begin{theorem}[\cite{HY09}]\label{thm:HY-general}
If $(X,T)$ is a null TDS, then for each finite open cover $\mU$ there is a constant $c=c(\mU)$ such
that $p^{*}_{X,\,\mU}(n) \le n^{c \log n}$ for every $n \ge 2$.
\end{theorem}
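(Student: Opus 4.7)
My plan is to prove \cref{thm:HY-general} by combining a Sauer--Shelah-type lemma for the iterated cover family $\{T^{-i}\mU\}_{i \ge 0}$ with the nullness hypothesis applied in a bootstrap fashion. Write $k = |\mU|$ and $\mU = \{U_1, \ldots, U_k\}$.

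Step 1 (Combinatorial reformulation). For a finite subset $S \subset \NN \cup \{0\}$, the cells of $\bigvee_{i \in S} T^{-i}\mU$ are the intersections $\bigcap_{i \in S} T^{-i}U_{j_i}$ indexed by label-tuples $(j_i)_{i \in S} \in [k]^S$, and $N\bigl(\bigvee_{i \in S}T^{-i}\mU\bigr)$ is the minimum cardinality of a subset $C \subset [k]^S$ whose associated cells cover $X$. The problem thus becomes one about a family of set-cover optima on the label patterns realised by $X$.

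Step 2 (Sauer--Shelah-type lemma). The technical core is a Sauer--Shelah-type lemma of the following schematic form: there is a constant $c_0 = c_0(k)$ such that for every finite $S \subset \NN \cup \{0\}$ of size $n$,
\[
N\bigl(\bigvee_{i \in S}T^{-i}\mU\bigr) > n^{c_0 \log n} \quad\Longrightarrow\quad \exists\, S' \subset S,\ |S'| \ge \log n,\ N\bigl(\bigvee_{i \in S'}T^{-i}\mU\bigr) \ge 2^{|S'|}.
\]
The proof would follow a Pajor-style inductive branching argument, adapted so that the inductive invariant is the minimum subcover rather than the raw number of distinct patterns. At each step one locates a coordinate in $S$ on which the subcover splits into two still-large sub-covers, records this coordinate, and recurses on the remainder.

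Step 3 (Bootstrapping with nullness). Suppose for contradiction that \cref{thm:HY-general} fails for the constant $c_0$ produced in Step~2; then $p^*_{X,\mU}(n) > n^{c_0 \log n}$ for arbitrarily large $n$. For each such $n$, choose $|S| = n$ realising the maximum and apply the lemma to obtain $S' \subset S$ of size $m := |S'| \ge \log n$ with $N\bigl(\bigvee_{i \in S'}T^{-i}\mU\bigr) \ge 2^m$. As $n$ grows along a subsequence, $m \to \infty$, so $p^*_{X,\mU}(m) \ge 2^m$ for infinitely many $m$, giving $\limsup_{m \to \infty}\tfrac{1}{m}\log p^*_{X,\mU}(m) \ge \log 2$, contradicting nullness.

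Step 4 (Conclude). Therefore $p^*_{X,\mU}(n) \le n^{c_0 \log n}$ for all $n \ge n_0(\mU)$; the finitely many smaller values $2 \le n < n_0(\mU)$ are absorbed into the constant, yielding the theorem with $c = c(\mU)$.

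The main obstacle is Step 2. Classical Sauer--Shelah controls the \emph{number of distinct patterns} of a set system, whereas here we need to control the \emph{minimum subcover}, a set-cover optimum that can be strictly smaller for open covers in which a single point belongs to multiple elements of $\mU$. Adapting Pajor's inductive branching so that the subcover lower bound (not just the pattern-count lower bound) survives each branching step is precisely the delicate Sauer--Shelah-type lemma whose shorter and more conceptual proof the paper advertises.
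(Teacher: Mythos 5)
Your Steps 1, 3 and 4 are fine and match the overall contradiction scheme, but the proposal has a genuine gap exactly where you locate the difficulty: Step 2 is not proved, and the route you sketch for it is not one that is known to work. You propose to run a Pajor-style branching induction in which the inductive invariant is the minimum subcover $N\bigl(\bigvee_{i\in S}T^{-i}\mU\bigr)$ for the original cover $\mU$ of size $k$. For $k\ge 3$ this quantity is a set-cover optimum for \emph{list}-valued constraints: a point $x$ can lie outside several of the $U_j$ simultaneously, so the forbidden labels at a coordinate form a set of size up to $k-1$, and the object is not a partial class in $\{1,\ldots,k,\star\}^{S}$. Neither \cref{thm:comb} nor Huang--Ye's Lemma 4.1 applies to it, and no argument is offered that a subcover-preserving branching step exists; subcover optima do not obey the shifting/branching recursions that pattern counts do, which is precisely why your ``main obstacle'' remark concedes the whole content of the theorem.

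What the paper does instead, and what your proposal misses, is a topological reduction that makes the combinatorial core tractable. Assuming $p^*_{X,\,\mU}$ grows faster than $n^{c\log n}$ for every $c$, a Blanchard-type compactness argument (\cref{lem:Blanchard}, using monotonicity and subadditivity of the functional $L$ and a nested sequence of closed sets shrinking to a point) produces a \emph{two-element} open cover $\mV=\{V_1,V_2\}$ with the same superquasipolynomial growth. For a $2$-cover the complements $V_1^c,V_2^c$ are disjoint, so each point $x$ defines a genuine partial function $h_x\in\{1,2,\star\}^{S}$, and the subcover optimum is exactly the covering number of the partial class $\{h_x\colon x\in X\}$ (\cref{claim:topo-comb}). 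Only then does the Sauer--Shelah lemma for partial classes (\cref{thm:comb}, case $r=2$) enter, in contrapositive form: a covering number exceeding $(|S|+1)^{4d\log(|S|+1)}$ forces a shattered set $W$ of size $d$, and shattering (not a maintained subcover invariant) yields $N\bigl(\bigvee_{i\in W}T^{-i}\mV\bigr)\ge 2^{|W|}$, contradicting nullness. Note also that the paper's proof of \cref{thm:comb} is an algorithmic net construction via a VC-minority function, not a Pajor-type shifting argument. So to complete your plan you would either have to prove a new list-constraint analogue of \cref{thm:comb} for general $k$-covers, or insert the reduction to $2$-covers; as written, the proposal reduces the theorem to an unproved lemma that is at least as hard as the result itself.
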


At the heart of Huang and Ye’s argument in \cref{thm:HY-general} is %a combinatorial lemma concerning a variant of VC dimension, which is 
an earlier version of \cref{thm:comb}.
%To show 
To illustrate the application of \cref{thm:comb} in dynamical systems, we provide a (simplified) proof of \cref{thm:HY-general} in \cref{sec:topo}. 

\subsection{Notation and organization}
 We adopt standard notation throughout. In particular, $[n]$ denotes the set $\{1,2,\ldots,n\}$. 
 Given a set $S$ and $k \in \mathbb{N}$, we write $\binom{S}{k}$ for the collection of all $k$-subsets of $S$.
 The notation $Y^{Z}$ refers to the family of all functions (or vectors) from a set $Z$ to a set $Y$. For $h\in Y^Z$ and $S\subseteq Z$, the projection of $h$ onto $S$, denoted $\restr{h}{S}$,  is the map in $Y^S$ defined by $i\mapsto h(i)$. Similarly, for $\mH \subseteq Y^Z$ and $S\subseteq Z$, the projection of $\mH$ onto $S$ is given by
\[
\restr{\mH}{S}=\{\restr{h}{S}: h\in \mH\}.
\]
For two nonnegative functions $f$ and $g$ of some underlying parameter $n$, we write $f=O(g)$ if there exist positive constants $C$ and $n_0$ such that $f(n)\le Cg(n)$ for all $n\ge n_0$, and $f=o(g)$ if $\lim_{n\rightarrow \infty}f(n)/g(n)=0$. To simplify the presentation, we systematically omit floor and ceiling signs. Unless specified otherwise, all logarithms are taken to base $2$.

%\vskip 0.3cm
%The rest of the paper is organized as follows.
We give the proof of \cref{thm:Natarajan} in \cref{sec:Natarajan} and the proof of \cref{thm:comb} in \cref{sec:partial classes}.
We then derive \cref{thm:dim0} from \cref{thm:Natarajan} in \cref{sec:dim0}. In \cref{sec:symbolic} we use \cref{thm:dim0} to study the complexity of symbolic dynamics (see \cref{thm:symbolic}). Finally, we close the paper with some concluding remarks in \cref{sec:conclusion}.

\section{\texorpdfstring{$k$}{}-Natarajan dimension}\label{sec:Natarajan}

In this section we provide a proof of \cref{thm:Natarajan}. 
Throughout the section, we view elements of product spaces as vectors. For a vector $x$, we denote by $x_i$ the value of the $i$-th coordinate. For integers $a$ and $b$, we employ the interval notation
\[
[a,b]:=\{x\in \mathbb{Z}\colon a \le x \le b\}.
\]
The following lemma makes up the bulk of the proof of \cref{thm:Natarajan}.
\begin{lemma}\label{lem:branching}
Given integers $r$ and $k$ with $r\ge k\ge 2$, let $\Omega$ denote the alphabet
\[
\{b_1,b_2,\dots,b_{k-1}\}\cup\Big\{c_A\colon A\in \binom{[r]}{k}\Big\}
\]
of size $k-1+\binom{r}{k}$. Then for every family $\mH \subseteq [r]^{[n]}$, there exist $n$ maps $\varphi_1,\ldots,\varphi_n$ and $n+1$ families $\mH_0:=\mH, \mH_1,\ldots,\mH_n$ (see the diagram below) with the following properties.
\begin{itemize}
    \item[(P1)] For $0\le i\le n$, $\mH_i$ is a subfamily of $\Omega^{[i]}\times {[r]}^{[i+1,n]}$.
    \item[(P2)] For $1\le i\le n$, $\varphi_i\colon \mH_{i-1}\rightarrow \mH_i$ is a bijection that preserves all but the $i$-th coordinate.
    \item[(P3)] Every element in $\mH_n$ has at most $\dim_k(\mH)$ coordinates in $\{c_A\colon A\in \binom{[r]}{k}\}$.
\end{itemize}
\end{lemma}

\[
\begin{tikzcd}
{[r]}^{[n]} & \Omega^{[1]}\times {[r]}^{[2,n]} & \Omega^{[2]}\times {[r]}^{[3,n]} & \cdots & \Omega^{[n]}\\
\mH_0=\mH\arrow[r,"\varphi_1"]\arrow[u, phantom, sloped, "\subseteq"] & \mH_1\arrow[r,"\varphi_2"]\arrow[u, phantom, sloped, "\subseteq"] & \mH_2\arrow[r]\arrow[u, phantom, sloped, "\subseteq"] & \cdots\arrow[r,"\varphi_n"]& \mH_n\arrow[u, phantom, sloped, "\subseteq"]
\end{tikzcd}
\]

\vspace{4mm}

Assuming \cref{lem:branching}, we now give a proof of \cref{thm:Natarajan}.

\begin{proof}[Proof of \cref{thm:Natarajan} assuming \cref{lem:branching}]
Consider a family $\mH\subseteq [r]^{[n]}$ with $\dim_k(\mH) \le d$. Applying \cref{lem:branching} to $\mH$, we get $\Omega,\varphi_1,\ldots,\varphi_n,\mH_1,\ldots,\mH_n$. Set $\Omega'=\{b_1,\ldots,b_{k-1}\}, \Omega''=\{c_A\colon A\in \binom{[r]}{k}\}$ and $\varphi=\varphi_n\circ \cdots \circ \varphi_1$.
By (P3), for every $x\in \mH$, $\varphi(x)$ has at least $n-d$ coordinates in $\Omega'$. Hence $\mH$ is partitioned into $d+1$ subfamilies $\mG_0,\ldots,\mG_d$, where
\[
\mathcal{G}_i=\{x\in \mH\colon \varphi(x)_{n-i}\in \Omega' \text{ and } \varphi(x)|_{[n-i]} \text{ has exactly } n-d \text{ coordinates in } \Omega'\}.
\]
Denote
\[
\mathcal{F}_i=\{(\varphi_{n-i}\circ \cdots \circ \varphi_1)(x)\colon x\in \mathcal{G}_i\}.
\]
From (P1) and (P2), we get $\mathcal{F}_i\subseteq \Omega^{[n-i]}\times [r]^{[n-i+1,n]}$ and $|\mathcal{F}_i|=|\mathcal{G}_i|$.
To bound the size of $\mF_i$, let $y$ be any vector of $\mF_i$. Then $y=(\varphi_{n-i}\circ \cdots \circ \varphi_1)(x)$ for some $x\in \mG_i$. By (P2), $\varphi(x)=(\varphi_n\circ \cdots\circ \varphi_{n-i+1})(y)$ and $y$ agree in the first $n-i$ coordinates. On the other hand, as $x\in \mG_i$, we know that $\varphi(x)_{n-i}\in \Omega'$, and that $\varphi(x)|_{[n-i]}$ has exactly $n-d$ (respectively $d-i$) coordinates in $\Omega'$ (respectively $\Omega''$). Therefore, $y_{n-i}\in \Omega'$, and $y|_{[n-i]}$ has exactly $n-d$ (respectively $d-i$) coordinates in $\Omega'$ (respectively $\Omega''$). From this, we obtain
\[
|\mathcal{F}_i|_{[n-i]}|\leq \binom{n-i-1}{n-d-1}(k-1)^{n-d}\binom{r}{k}^{d-i}=(k-1)^{n-d}\binom{n-i-1}{d-i}\binom{r}{k}^{d-i}.
\]
Combining this with the trivial bound $|\mathcal{F}_i|_{[n-i+1,n]}|\leq r^i$ yields
\[
|\mathcal{F}_i|\le |\mathcal{F}_i|_{[n-i]}|\cdot |\mathcal{F}_i|_{[n-i+1,n]}|\le (k-1)^{n-d}\binom{n-i-1}{d-i}\binom{r}{k}^{d-i} \cdot r^{i}.
\]
Therefore, we have
$$|\mH|=\sum_{i=0}^d|\mathcal{G}_{i}|=\sum_{i=0}^d|\mathcal{F}_{i}|\leq (k-1)^{n-d}\sum_{i=0}^d \binom{n-i-1}{d-i}\binom{r}{k}^{d-i}r^{i},$$
finishing the proof.
\end{proof}

\paragraph{\bf Construction}
We recursively define $\varphi_1,\mH_1,\ldots, \varphi_n,\mH_n$. Suppose we have defined $$\mH_0,\varphi_1,\mH_1,\ldots,\varphi_{i-1},\mH_{i-1}$$ for some $1\leq i \leq n$. We shall construct a map $\varphi_i\colon \mH_{i-1}\rightarrow \Omega^{[i]}\times [r]^{[i+1,n]}$ and let $\mH_i=\varphi_i(\mH_{i-1})$.
For $x\in \mH_{i-1}$, the $i$-th {\em block} of $\mH_{i-1}$ containing $x$ is the family
\[
\partial^{(i)}(x)=\{y\in \mH_{i-1}\colon \text{$x$ and $y$ agree in all but possibly the $i$-th coordinate}\}.
\]
Note that $x\in \partial^{(i)}(x)$. Since $y\in \partial^{(i)}(x)$ if and only if $\partial^{(i)}(x)=\partial^{(i)}(y)$, $\mH_{i-1}$ is decomposed into $i$-th blocks. We thus only need to define $\varphi_i$ on each $i$-th block. Given $x=(x_1,\ldots,x_n)\in \mH_{i-1}$, order the elements of $\partial^{(i)}(x)$ as
\[
(x_1,\ldots,x_{i-1},s_1,x_{i+1},\ldots,x_n),\ldots, (x_1,\ldots,x_{i-1},s_t,x_{i+1},\ldots,x_n), \text{with $1\le s_1<\ldots<s_t \le r$}.
\]
For $1 \le j \le t$, define $\varphi_i(x_1,\ldots,x_{i-1},s_j,x_{i+1},\ldots,x_n)$ to be the vector
\begin{itemize}
\item Except the $i$-th coordinate, the other coordinates of $\varphi_i(x_1,\ldots,x_{i-1},s_j,x_{i+1},\ldots,x_n)$ and $x$ are the same;
\item The $i$-th coordinate of $\varphi_i(x_1,\ldots,x_{i-1},s_j,x_{i+1},\ldots,x_n)$ is $b_j$ if $1\le j\le k-1$, and $c_{\{s_1,\ldots,s_{k-1}, s_j\}}$ if $j\ge k$.
\end{itemize}
As these vectors $\varphi_i(x_1,\ldots,x_{i-1},s_j,x_{i+1},\ldots,x_n)$ lie in $\Omega^{[i]}\times {[r]}^{[i+1,n]}$, $\mH_i=\varphi_i(\mH_{i-1})$ is a subfamily of $\Omega^{[i]}\times {[r]}^{[i+1,n]}$, thereby verifying (P1).

To verify the other properties, we first make some simple observations that follow readily from the definition of $\varphi_i$.

\begin{observation}\label{obs:branching} The following hold for every $i\in [n]$ and $x\in \mH_{i-1}$.
\begin{itemize}
    \item[(a)] $\varphi_i(x)$ preserves all but the $i$-th coordinate of $x$.
    \item[(b)] The restriction of $\varphi_i$ to $\partial^{(i)}(x)$ is injective.
    \item[(c)] Suppose $x=(x_1,\ldots,x_n)\in \mH$ and the $i$-th coordinate of $\varphi_i(x)$ is $c_A$ for some $A\in \binom{[r]}{k}$. Then $\mH_{i-1}\supseteq \partial^{(i)}(x)$ contains $\{x_1\}\times \cdots \times \{x_{i-1}\}\times A \times \{x_{i+1}\} \times \cdots \times \{x_n\}$. 
\end{itemize}
\end{observation}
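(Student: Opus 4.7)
The plan is to unpack the construction of $\varphi_i$ on a single $i$-th block $\partial^{(i)}(x)$ and verify each assertion directly; nothing substantive is hidden, and the argument reduces to bookkeeping. I would fix $x\in\mH_{i-1}$, list the block as
\[
(x_1,\ldots,x_{i-1},s_j,x_{i+1},\ldots,x_n),\qquad 1\le j\le t,\quad s_1<\ldots<s_t,
\]
and exploit the fact that each element of the block is indexed by the unique $j\in[t]$ whose $i$-th coordinate equals $s_j$.

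Part (a) I would read off from the first bullet of the construction, which specifies that the non-$i$ coordinates of $\varphi_i(x_1,\ldots,s_j,\ldots,x_n)$ agree with those of $x$; and these in turn agree with the non-$i$ coordinates of $(x_1,\ldots,s_j,\ldots,x_n)$ itself. For (b), I would combine (a) with the remark that two elements of $\partial^{(i)}(x)$ coincide iff their indices $j$ coincide, so it suffices to show that $j\mapsto(\varphi_i(\cdot))_i$ is injective. Its image is $b_j$ for $j<k$ and $c_{\{s_1,\ldots,s_{k-1},s_j\}}$ for $j\ge k$, and I would separate three subcases: two $b$-images are distinct because the symbols $b_1,\ldots,b_{k-1}$ are; a $b$-image and a $c$-image are distinct because the $b$- and $c$-parts of $\Omega$ are disjoint; and two $c$-images with $j\ne j'$ both $\ge k$ are distinct because $s_j\ne s_{j'}$ while neither lies in $\{s_1,\ldots,s_{k-1}\}$, so the two $k$-sets recorded in the subscripts differ in their final element.

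For (c) I would run the construction in reverse. If $(\varphi_i(x))_i=c_A$, the second bullet forces some $j\ge k$ for which $x$ equals the $j$-th listed block member and $A=\{s_1,\ldots,s_{k-1},s_j\}$. In particular $A\subseteq\{s_1,\ldots,s_t\}$, so for every $a\in A$ the vector $(x_1,\ldots,x_{i-1},a,x_{i+1},\ldots,x_n)$ appears among the listed block members, hence lies in $\partial^{(i)}(x)\subseteq\mH_{i-1}$; varying $a$ over $A$ delivers the claimed product of singletons and $A$.

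I do not expect a real obstacle; the one point to flag is notational. Part (c) is phrased for ``$x\in\mH$'', but since $\varphi_i$ is defined only on $\mH_{i-1}$, I would read this as $x\in\mH_{i-1}$, consistently with the rest of the construction. Beyond that the proof is pure case analysis on $j<k$ versus $j\ge k$, together with the fact that the alphabet $\Omega$ is the disjoint union $\{b_1,\ldots,b_{k-1}\}\sqcup\{c_A\colon A\in\binom{[r]}{k}\}$.
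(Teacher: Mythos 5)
Your verification is correct and takes essentially the same route as the paper, which states the observation without proof on the grounds that it ``follows readily from the definition of $\varphi_i$'': your bookkeeping (distinctness of the $b$- and $c$-symbols within a block for (b), and reversing the second bullet of the construction for (c)) is precisely that routine check. Your decision to read (c) with $x\in\mH_{i-1}$ is also the intended one, consistent with the quantification in the observation's header.
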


\begin{proof}[Proof of \cref{lem:branching} (continuation)]
(P2) As $\mH_i=\varphi_i(\mH_{i-1})$, evidently $\varphi_i$ is surjective. So what's left is to show that $\varphi_i$ is injective. Suppose $\varphi_i(x)=\varphi_i(y)$.
By \cref{obs:branching} (a), $\varphi_i$ preserves all but the $i$-th coordinate, so $x$ and $y$ agree in all but possibly the $i$-th coordinate. Thus, $x$ and $y$ are two vectors of $\partial^{(i)}(x)$ with $\varphi_i(x)=\varphi_i(y)$. But by \cref{obs:branching} (b), the map $\varphi_i$ restricted to $\partial^{(i)}(x)$ is injective, so one must have $x=y$. This proves (P2).

(P3) Let $d=\dim_k(\mH)$. Suppose for the contrary that there is a vector $y\in \mH_n$ together with a $(d+1)$-element subset $I \subseteq [n]$ such that for every $i\in I$, $y_i=c_{A_i}$ for some $A_i\in \binom{[r]}{k}$. By repeatedly applying \cref{obs:branching} (a) and (c) in a reversed ordering with respect to the coordinates, we find that $\restr{\mH}{I}$ contains $\prod_{i\in I} A_i$, implying $\dim_k(\mH)\geq d+1$, a contradiction. This finishes the proof.
\end{proof}

\section{A variant of VC dimension to partial concept classes}\label{sec:partial classes}

In this section, we provide a short proof of \cref{thm:comb}.
We need to show that any partial concept class with bounded $\VC$-dimension admits a small net. We shall construct the net via an algorithm. We first fix some notations and make some innocuous observations.

Let $\mH \subseteq \{1,\ldots,r,\star\}^{Z}$ be a partial concept class. The {\bf shattering strength} of $\mH$, denoted by $s(\mH)$, is the number of subsets $S\subseteq Z$ that are shattered by $\mH$. By convention, the shattering strength of the empty class is $0$, and the empty set is shattered by all nonempty classes (and so the shattering strength of any nonempty class is at least $1$). It is easy to see that $s(\mH) \le \binom{|Z|}{\le \VC(\mH)}$. For $(i,j) \in Z\times [r]$, we denote
\[
\mH_{i\rightarrow j}=\left\{h \in \mH\colon h(i)=j\right\}.
\]
Define the VC-minority function $M_{\mH}\colon Z\rightarrow [r]$ of $\mH$ by letting $M_{\mH}(i)$ be the value $j \in [r]$ which minimises $s(\mH_{i\rightarrow j})$, with an arbitrary tie-breaking rule. Observe that for any $i\in Z$,
\begin{equation}\label{ineq:shrinking-1}
(r-1)\cdot s(\mH) \ge s(\mH_{i\rightarrow 1})+\ldots + s(\mH_{i\rightarrow r}).
\end{equation}
In particular,
\begin{equation}\label{ineq:shrinking-2}
s(\mH_{i\rightarrow j}) \le \tfrac{r-1}{r}\cdot s(\mH), \text{ where } j=M_{\mH}(i).
\end{equation}
To see \eqref{ineq:shrinking-1}, for any subset $S\subseteq Z$ with $i\notin S$, we consider the contribution of the pair $S, S\cup \{i\}$ to both sides of the inequality.
We note that every set $S$ that is shattered by one of the classes $\mH_{i\rightarrow 1},\ldots,\mH_{i\rightarrow r}$ is also shattered by $\mH$,
and if $S$ is shattered by all of the $\mH_{i\rightarrow j}$ then both $S$ and $S\cup \{i\}$ are shattered by $\mH$.

We shall use the following algorithm to construct a small net of a given partial concept class.

\begin{alg} 
Fix a partial concept class $\mH\subseteq \{1,\ldots,r,\star\}^{[n]}$. For any partial function $h\in \mH$, the algorithm will output an index set $A^{n}\subseteq [n]$ and a total function $f\in \{1,\ldots,r\}^{[n]}$. Set $A^{0}=\emptyset$ and $\mH^{0}=\mH$. For $i=1,\ldots,n$, do the following:

\noindent (1) Compute the value of the VC-minority function of $\mH^{i-1}$ at $i$. Denote this value by $j$.

\noindent (2) If $h(i)\ne j$, then set $f(i)=j$, $A^{i}=A^{i-1}$ and $ \mH^{i}=\mH^{i-1}$.

\noindent (3) If $h(i)=j$, then set $f(i)=j+1 \pmod{r}$, $A^{i}=A^{i-1}\cup\{i\}$ and $\mH^{i}=(\mH^{i-1})_{i\rightarrow j}$.
\end{alg}

The outputs of the algorithm satisfy the following properties.

\begin{lemma}\label{lem:algorithm}
\textcolor{white}{}
\begin{itemize}
\item[(a)] $h\in \mH^i$ for every $0\le i \le n$. In particular, $s(\mH^i)\ge 1$ for every $0\le i \le n$.
\item[(b)] $h(i) \ne f(i)$ for every $i\in [n]$.
\item[(c)] $f$ is determined by $A^{n}$.
\item[(d)] $A^{n}$ is a subset of $[n]$ of size at most $\log_{\frac{r}{r-1}}s(\mH)$.
\end{itemize}
\end{lemma}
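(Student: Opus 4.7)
The plan is to verify the four items largely by inspection of the algorithm, with the main work being an induction along the iterations.

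For part (a), I would proceed by induction on $i$. The base case $h\in \mH^0 = \mH$ is immediate. For the inductive step, in case (2) we have $\mH^{i}=\mH^{i-1}$, so $h\in \mH^{i}$ by the inductive hypothesis. In case (3), we have $h(i)=j$ and $\mH^i=(\mH^{i-1})_{i\to j}$, so again $h\in \mH^i$. Since $\mH^i$ is nonempty, it shatters the empty set, hence $s(\mH^i)\ge 1$. Part (b) follows directly from the definition of the algorithm: in case (2), $f(i)=j$ while $h(i)\ne j$ by assumption; in case (3), $f(i)=j+1\pmod{r}$, which differs from $j=h(i)$ because $r\ge 2$.

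For part (c), the first observation is that the sequence $A^0\subseteq A^1\subseteq\cdots\subseteq A^n$ is monotone, so $A^{i-1}=A^n\cap[i-1]$. I would then show by induction on $i$ that $\mH^{i-1}$ and $\restr{f}{[i-1]}$ are determined by $A^n$ alone (the initial class $\mH$ being fixed throughout). Given $\mH^{i-1}$, the VC-minority value $j=M_{\mH^{i-1}}(i)$ is specified by the fixed tie-breaking rule, and then knowing whether $i\in A^n$ tells us whether case (2) or case (3) was invoked, thereby determining both $f(i)$ and $\mH^i$.

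For part (d), I would iterate the inequality (\ref{ineq:shrinking-2}). Whenever $i\in A^n\setminus A^{i-1}$, the algorithm enters case (3) and $\mH^i=(\mH^{i-1})_{i\to M_{\mH^{i-1}}(i)}$, so $s(\mH^i)\le \tfrac{r-1}{r}\,s(\mH^{i-1})$; otherwise $s(\mH^i)=s(\mH^{i-1})$. Telescoping over $i=1,\ldots,n$ gives $s(\mH^n)\le \left(\tfrac{r-1}{r}\right)^{|A^n|}s(\mH)$, and combining with the lower bound $s(\mH^n)\ge 1$ from (a) yields $|A^n|\le \log_{r/(r-1)}s(\mH)$.

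The main conceptual obstacle is part (c): at first glance the algorithm depends on $h$, so it is not obvious that $f$ should be determined by $A^n$ alone. The resolution is that while the \emph{branching} at each step depends on $h$, the resulting class $\mH^i$ and the value $f(i)$ depend only on the initial $\mH$ and on which branch was taken, which is exactly the information encoded in $A^n$.
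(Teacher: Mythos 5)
Your proof is correct and follows essentially the same route as the paper: parts (a) and (b) by direct inspection, part (c) by showing that $\mH^i$ and $f(i)$ are determined by $\mH$ together with the branch pattern recorded in $A^n$, and part (d) by iterating the contraction inequality \eqref{ineq:shrinking-2} against the lower bound $s(\mH^n)\ge 1$. The only difference is cosmetic: you induct on the coordinate $i$, while the paper organizes the same argument as an induction over the elements $a_1<\cdots<a_k$ of $A^n$.
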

\begin{proof}
Properties (a) and (b) are easy to verify. For the others, write $A^{n}=\{a_1<a_2<\cdots<a_k\}$ and let $a_0=0$, $a_{k+1}=n+1$. Given $i\in [n]$, there must exist $\ell=\ell(i)\in [k+1]$ with $a_{\ell-1}<i\le a_{\ell}$. Denote by $j=j(i)$ the value of the VC-minority function of $\mH^{a_{\ell-1}}$ at $i$. A simple induction on $\ell$ shows that $A^i=\{a_1,\ldots,a_{\ell-1}\}$,
$
\mH^i=\mH^{a_{\ell-1}}$ and $f(i)=j$ when $a_{\ell-1}<i<a_{\ell}$, while $A^i=\{a_1,\ldots,a_{\ell}\}$,
$\mH^i=(\mH^{a_{\ell-1}})_{i\rightarrow j}$ and $f(i)=j+1 \pmod{r}$ when $i=a_{\ell}$. From this we see that for every $i\in [n]$, $A^i,\mH^i$ and $f(i)$ are uniquely determined by $A^n$ and $i$.
This implies (c).

From \eqref{ineq:shrinking-2} and the above discussion, we get $s(\mH^{a_{\ell}}) \le \frac{r-1}{r} \cdot s(\mH^{a_{\ell-1}})$ for every $1\le \ell \le k$. Together with (a), this implies $1\le s(\mH^{a_{k}}) \le (\frac{r-1}{r})^k \cdot s(\mH^{a_0})=(\frac{r-1}{r})^{k} \cdot s(\mH)$, which results in $k \le \log_{\frac{r}{r-1}}s(\mH)$, as desired.
\end{proof}

From \cref{lem:algorithm}, we quickly deduce \cref{thm:comb}.

\begin{proof}[Proof of \cref{thm:comb}]
Let $\mH \subseteq {\{1,\ldots,r,\star\}}^{[n]}$ be a partial concept class with $\VC(\mH) \le d$. Run the algorithm for each $h\in \mH$, and let $\mF$ be the family consisting of all outputs $f$. By property (b) in \cref{lem:algorithm}, $\mF$ is a net of $\mH$. Finally, from properties (c) and (d) in \cref{lem:algorithm}, we find
\[
|\mF| \le \binom{n}{\le \log_{\frac{r}{r-1}} s(\mH)} \le \binom{n}{\le \log_{\frac{r}{r-1}} \binom{n}{\le d}},
\]
where the second inequality holds since $s(\mH) \le \binom{n}{\le \VC(\mH)}\le \binom{n}{\le d}$. This completes our proof.
\end{proof}

\section{Applications to dynamical systems}

As mentioned earlier, in this section we study the complexity function $p^*_{X,\, \mU}(n)$ when $\mU$ is a clopen partition of $X$.
We derive the main result of this section, namely \cref{thm:dim0}, from
\cref{thm:Natarajan}. In \cref{sec:symbolic}, we use \cref{thm:dim0} to study the complexity of symbolic dynamics, improving another result of Huang and Ye \cite{HY09}.

We begin with an auxiliary lemma that is used to prove the statements given later in the subsections.

\begin{lemma}\label{lem:clopen-family}
Let $(X,T)$ be a TDS and let $\mU=\{U_1,\ldots,U_r\}$ be a clopen partition of $X$. Then, for any subset $S \subseteq \NN\cup\{0\}$,
\[
N\big(\bigvee_{i\in S} T^{-i}\mU\big)=\# \Big\{f\in [r]^{S}: \bigcap_{i\in S}T^{-i}U_{f(i)} \neq \emptyset\Big\}.
\]
\end{lemma}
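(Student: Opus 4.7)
The plan is to leverage the fact that when $\mU$ is a clopen partition of $X$, the joint $\bigvee_{i\in S}T^{-i}\mU$ is itself a clopen partition of $X$. Concretely, I will show that its nonempty cells are in bijection with the functions $f\in [r]^S$ for which $V_f:=\bigcap_{i\in S}T^{-i}U_{f(i)}$ is nonempty, and then invoke the elementary fact that for any cover of $X$ by pairwise disjoint sets the covering number $N$ equals the number of nonempty members.

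I would first check that each preimage family $T^{-i}\mU=\{T^{-i}U_1,\ldots,T^{-i}U_r\}$ is a clopen partition of $X$: continuity of $T$ ensures each $T^{-i}U_j$ is clopen, and preimages preserve both disjointness and union. Taking intersections, the joint consists of the sets $V_f$, which are pairwise disjoint: if $f\ne g$, pick $i_0\in S$ with $f(i_0)\ne g(i_0)$; then $V_f\subseteq T^{-i_0}U_{f(i_0)}$ while $V_g\subseteq T^{-i_0}U_{g(i_0)}$, and these two preimage cells are disjoint. Moreover $\bigcup_{f\in [r]^S}V_f=X$. In particular, $f\mapsto V_f$ is injective on $\{f\in [r]^S\colon V_f\ne\emptyset\}$, so, as a set of subsets, the joint has exactly $\#\{f\in [r]^S\colon V_f\ne\emptyset\}$ nonempty elements (together with at most one empty element, which plays no role).

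Finally, I would apply the observation that when an open cover $\mW$ of $X$ is a partition, every nonempty member $W\in\mW$ must belong to every subcover of $\mW$: any point $x\in W$ lies in no other element. Hence $N\bigl(\bigvee_{i\in S}T^{-i}\mU\bigr)$ equals the number of nonempty $V_f$'s, which is the right-hand side of the identity. I do not expect a real obstacle; the only mild subtlety is that a cover is a set (not a multiset), so one must distinguish the indexing $f$'s from the actual members of the joint, which is precisely why only the nonempty $f$'s are counted on the right.
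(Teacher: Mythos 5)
Your proposal is correct and follows essentially the same route as the paper's own proof: since $\mU$ is a partition, the cells of $\bigvee_{i\in S}T^{-i}\mU$ are pairwise disjoint, so the minimum subcover consists exactly of the nonempty cells, each of which has the form $\bigcap_{i\in S}T^{-i}U_{f(i)}$. Your extra care about the injectivity of $f\mapsto V_f$ on the nonempty cells is a fine (and slightly more explicit) touch, but it is the same argument.
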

\begin{proof}
Since $\mU$ is a partition of $X$, the sets in the open cover $\bigvee_{i\in S} T^{-i} \mU$ are pairwise disjoint. Hence the minimum size of a subcover of $\bigvee_{i\in S} T^{-i} \mU$ is exactly the number of nonempty sets in $\bigvee_{i\in S} T^{-i} \mU$. Moreover, every set in $\bigvee_{i\in S} T^{-i} \mU$ is of the form $\bigcap_{i\in S} T^{-i}U_{f(i)}$ for some function $f\in \{1,\ldots,r\}^S$. Therefore, $N\big(\bigvee_{i\in S} T^{-i}\mU\big)=\# \left\{f\in [r]^{S}: \bigcap_{i\in S}T^{-i}U_{f(i)} \neq \emptyset\right\}$.
\end{proof}

\subsection{An application in topological dynamical systems}\label{sec:dim0}

Given a total class $\mH \subseteq \{1,\ldots,r\}^Z$ and a positive integer $k$, we say a subset $S\subseteq Z$ is {\it $k$-Natarajan shattered} by $\mH$ if $\restr{\mH}{S}$ contains a subclass of the form $\prod_{i\in S}Y_i$, where $Y_i$ is a $k$-element subset of $\{1,\ldots,r\}$ for each $i\in S$. Then, $\dim_k(\mH)$ equals the maximum size of a $k$-Natarajan shattered set.

\begin{proof}[Proof of \cref{thm:dim0} assuming \cref{thm:Natarajan}]
Let $\mU=\{U_1,\ldots,U_r\}$ be a clopen partition of $X$. For each $n\in \NN$, let $V_n$ be a set of $n$ nonnegative integers such that
$N\big(\bigvee_{i\in V_n} T^{-i} \mU\big)=p^*_{X,\, \mU}(n)$.
Define
\[
\mH_n=\Big\{h\in [r]^{V_n}\colon \bigcap_{i\in V_n}T^{-i}U_{h(i)} \ne \emptyset\Big\}.
\]
Then, by \cref{lem:clopen-family}, $N\big(\bigvee_{i\in V_n} T^{-i} \mU\big)=|\mH_n|$, resulting in
\[
p^*_{X,\, \mU}(n)=|\mH_n|.
\]

Let $\ell$ be the maximum integer such that $\limsup_{n\rightarrow \infty} \dim_{\ell}(\mH_n)=\infty$. Then we have $\ell\in \{1,\ldots,r\}$.
We consider the upper bound on $p^*_{X,\, \mU}(n)$ first.
If $\ell=r$, then clearly $p_{X,\, \mU}^*(n) \le |\mU|^n=r^n=\ell^n$.
Now suppose $\ell\le r-1$, then we have $\dim_{\ell+1}(\mH_n)=O(1)$.
Since $\mH_n\subseteq \{1,\ldots,r\}^{V_n}$, $|V_n|=n$ and $\dim_{\ell+1}(\mH_n)=O(1)$, we derive from \cref{thm:Natarajan} that $|\mH_n|\le n^{O(1)}\ell^n$, and so
\[
p^*_{X,\, \mU}(n)=|\mH_n| \le n^{O(1)}\ell^n \mbox{~~ when ~} \ell\le r-1.
\]

We proceed to lower bound $p^*_{X,\, \mU}(n)$. Because $\limsup_{n\rightarrow \infty} \dim_{\ell}(\mH_n)=\infty$, there exist a sequence $\{n(k)\}_{k\in \NN}$ of positive integers and a sequence $\{W_k\}_{k\in \NN}$ of sets satisfying
\begin{itemize}
    \item[(i)] $W_k$ is $\ell$-Natarajan shattered by $\mH_{n(k)}$.
    \item[(ii)] $W_k$ is a subset of $V_{n(k)}$ of size $k$;
\end{itemize}
For every $k\in \NN$, we have
\begin{align*}
p_{X,\, \mU}^*(k)\overset{(ii)}{\ge} N\big(\bigvee_{i\in W_k} T^{-i} \mU\big)&=\# \Big\{f\in [r]^{W_k}: \bigcap_{i\in W_k}T^{-i}U_{f(i)} \neq \emptyset\Big\}\\
&\ge |\restr{\mH_{n(k)}}{W_k}|\overset{(i)}{\ge}{\ell}^{|W_k|}\overset{(ii)}{=}{\ell}^k,
\end{align*}
where the first equality follows from \cref{lem:clopen-family}, and in the second inequality we used the definition of $\mH_{n(k)}$.
We therefore get $p_{X,\, \mU}^*(n)=\ell^n$ when $\ell=r$ and
\[
{\ell}^n \le p_{X,\, \mU}^*(n) \le n^{O(1)}{\ell}^n \quad \text{ when $\ell\le r-1$.}
\]
From this we find $h_{\topo}^*(T,\mU)= \limsup _{n \rightarrow+\infty} \frac{1}{n} \log p^*_{X,\, \mU}(n)=\log \ell$, where $\ell\in \{1,\ldots,r\}$.
This completes our proof.
\end{proof}

\subsection{An application in symbolic dynamics}\label{sec:symbolic}

Given an integer $r\ge 2$, we consider the product set $\Omega_r=\{1, \ldots, r\}^{\mathbb{N}}$. Topology on $\{1, \ldots, r\}$ is discrete, and $\Omega_r$ is endowed with the product topology. Since $\{1, \ldots, r\}$ is compact and metrisable, so is the product space $\Omega_r$. Concretely, one can equip $\Omega_r$ with the metric $d((x_n)_{n\in \NN},(y_n)_{n\in \NN})=\sum_{n\ge 1}2^{-n}\bm{1}_{x_n\neq y_n}$.
Let $T\colon \Omega_r \rightarrow \Omega_r$ be the shift $T(x_n)_{n\in \NN}:=(x_{n+1})_{n\in \NN}$. A {\bf subshift} is a closed $T$-invariant subset of $\Omega_r$.

Consider a subshift $X\subseteq \Omega_r$. We see that $(X,T)$ is a TDS. For $1\le i \le r$, $U_i$ denotes the clopen set $X\cap \{x\in \Omega_r\colon x_1=i\}$.
Then $\mU_0:=\{U_1,\ldots,U_r\}$ is a clopen partition of $X$. Hence we can define
\[
p_X^*(n):=p^*_{X,\,\mU_0}(n) \enskip \text{for $n\in \NN,$ and $h^*(X):=h^*_{\topo}(T,\mU_0)$}.
\]
As a direct application of \cref{thm:dim0}, we get the following result.

\begin{theorem}\label{thm:symbolic}
 For any subshift $(X, T)$ on $r$ letters, one has $h^*(X)=\log \ell$ for some $\ell \in \{1,\ldots,r\}$. Moreover, one of the following alternatives holds.
 \begin{itemize}
     \item[(a)] If $\ell =r$, then $p_{X}^*(n)=r^n$ for all $n \in \mathbb{N}$.
     \item[(b)] If $\ell\le r-1$, then there exists a constant $c>0$ such that $\ell^n \le p_{X}^*(n) \le n^c \ell^n$ for all $n \ge 2$.
 \end{itemize}
\end{theorem}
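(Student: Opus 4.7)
The plan is to derive \cref{thm:symbolic} as an essentially immediate corollary of \cref{thm:dim0}, after verifying that the distinguished cover $\mU_0$ of the subshift is in fact a clopen partition. First I would check that the cylinder sets $\{x\in \Omega_r\colon x_1=i\}$ for $i\in [r]$ are pairwise disjoint (each element of $\Omega_r$ has a unique first coordinate), cover $\Omega_r$, and are clopen in the product topology (a cylinder fixing finitely many coordinates in a product of discrete finite spaces is open, and its complement is a finite union of cylinders, hence open as well). Intersecting with the closed set $X$ then gives that $\mU_0=\{U_1,\dots,U_r\}$ is a clopen partition of $X$.

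Next I would apply \cref{thm:dim0} to the TDS $(X,T)$ with the partition $\mU_0$. The theorem outputs an integer $\ell\in\{1,\ldots,r\}$ such that $h^*_{\topo}(T,\mU_0)=\log\ell$, with $p^*_{X,\,\mU_0}(n)=r^n$ when $\ell=r$, and $\ell^n\le p^*_{X,\,\mU_0}(n)\le n^c\ell^n$ for some $c>0$ when $\ell\le r-1$. Finally I would simply invoke the definitions $p_X^*(n):=p^*_{X,\,\mU_0}(n)$ and $h^*(X):=h^*_{\topo}(T,\mU_0)$ to rewrite the conclusion in the stated notation, yielding alternatives (a) and (b) of \cref{thm:symbolic}.

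Since the theorem is really just a renaming of \cref{thm:dim0} in the subshift setting, there is no genuine obstacle in the argument; the only point that requires a brief verification is the topological one that $\mU_0$ is a clopen partition, which is standard. Thus the entire proof should fit in a short paragraph once \cref{thm:dim0} is in hand.
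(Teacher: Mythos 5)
Your proposal is correct and matches the paper's argument: the paper likewise notes that $\mU_0$ is a clopen partition of the subshift and obtains \cref{thm:symbolic} as a direct application of \cref{thm:dim0}, with $p_X^*$ and $h^*(X)$ defined via $\mU_0$.
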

We remark that for $\ell \le r-1$, a much weaker bound of the form $p_{X}^*(n) \le n^{O(\log n)} \ell^n$ was obtained by Huang and Ye \cite[Theorem 5.5]{HY09}.

For the rest of this section, we give a more explicit expression for $p^*_{X}(n)$. Huang and Ye \cite{HY09} claimed, without a proof, that
\begin{equation}\label{eq:complexity-subshift}
p_X^*(n)=p^*_{X,\,\mU_0}(n)=\max _{0\le s_1<\ldots <s_n} \#\left\{x_{1+s_1}\ldots x_{1+s_n}\colon x\in X\right\}.
\end{equation}
For the reader's convenience, we provide a proof of this simple fact. Let $s_1<\ldots<s_n$ be any sequence of $n$ nonnegative integers. Since $\mU_0$ is a clopen partition of $X$, it follows from \cref{lem:clopen-family} that
\begin{align*}
N\big(\bigvee_{i\in [n]} T^{-s_i} \mU\big)&=\# \Big\{f\in [r]^{[n]}: \bigcap_{i\in [n]}T^{-s_i}U_{f(i)} \neq \emptyset\Big\}\\
&=\# \Big\{f\in [r]^{[n]}: \text{ there is $x\in X$ with } x\in \bigcap_{i\in [n]}T^{-s_i}U_{f(i)}\Big\}\\
&=\# \Big\{f\in [r]^{[n]}: \text{ there is $x\in X$ with } T^{s_i}x\in U_{f(i)} \text{ for every $1\le i\le n$}\Big\}\\
&=\# \Big\{f\in [r]^{[n]}: \text{ there is $x\in X$ with } x_{1+s_1}=f(1),\ldots, x_{1+s_n}=f(n)\Big\}\\
&=\#\left\{x_{1+s_1}\ldots x_{1+s_n}\colon x\in X\right\}.
\end{align*}
Taking the maximum over all sequences $s_1<\ldots<s_n$ yields \eqref{eq:complexity-subshift}.

In a special case when $X$ is the closure of the orbit of a word $a\in \Omega_r$ under the shift map $T$, one can further simplify \eqref{eq:complexity-subshift}. Indeed, for every $x\in X=\overline{\{a,Ta,T^2a,\ldots\}}$ and for every $n_0\in \NN$, there exists $y\in \{a,Ta,T^2a,\ldots\}$ such that $d(x,y)<2^{-n_0}$. Suppose $y=T^{m-1}a$ for some $m\in \NN$. Then the condition $d(x,y)<2^{-n_0}$ forces $x_i=y_i=a_{m+i}$ for all $i \le n_0$. Together with \eqref{eq:complexity-subshift}, this implies
\[
p^*_X(n)=\max_{0\le s_1<\ldots<s_n}\#\left\{a_{m+s_1}\ldots a_{m+s_n}\colon m\in \NN\right\}.
\]
The right-hand side of the equation is also called the {\bf maximal pattern complexity} of $a$. Actually, this concept was introduced by Kamae and Zamboni \cite{KZ02a,KZ02b}, and was the inspiration behind the work of Huang and Ye \cite{HY09} on the maximal pattern entropy.

\section{Concluding remarks}\label{sec:conclusion}

In this paper we study two variants of the VC dimension and their connections among dynamical systems, combinatorics and theoretical computer science.
One intriguing question that deserves further investigation is the tightness of the bounds in \cref{thm:comb} for $r\ge 3$. \cref{thm:lower-bound} tells us that these bounds are essentially tight for $r=2$. Some of the arguments in the proof of \cref{thm:lower-bound} do generalise to larger $r$.
Let us recall some notions from hypergraph theory. Let $G=(V,E)$ be an $r$-graph. The $r$-partition number of $G$, denoted by $f_r(G)$, is the minimum number of complete $r$-partite $r$-graphs needed to partition the edge set of $G$. The chromatic number $\chi(G)$ of $G$ is the minimum $k$ for which there exists a coloring $c\colon V\rightarrow [k]$ such that every edge $e\in E$ contains two vertices $u,v$ with $c(u)\ne c(v)$.
One can easily extend \cref{lem:graph-to-class} to $r$-graphs as follows.
\begin{proposition}
For every $r$-graph $G$ with $f_r(G)=n$, there exists a partial concept class $\mH \subseteq \{1,\ldots,r,\star\}^{[n]}$ with $\VC(\mH) \le 1$ and $C(\mH) \ge \chi(G)$.
\end{proposition}
What is missing is an analogue of \cref{thm:separation} for $r$-graphs.

\begin{question}[A hypergraph Alon--Saks--Seymour problem]
Let $r\ge 3$. For every $n$, is there an $r$-graph $G$ such that $f_r(G)=n$ and $\chi(G)\ge n^{c_r\log n}$, where $c_r>0$ is a constant depending only on $r$?
\end{question}

\subsection*{Acknowledgement}
G. G. was supported in part by the National Key Research and Development Program of China 2023YFA1010201, National Natural Science Foundation of China grant 12401448, and Natural Science Foundation of Fujian Province 2024J08030.
J. M. was supported in part by the National Key Research and Development Program of China 2023YFA1010201, National Natural Science Foundation of China grant 12125106, and Innovation Program for
Quantum Science and Technology 2021ZD0302902.
T. T. was supported by the National Key Research and Development Program of China 2023YFA1010201 and Excellent Young Talents Program (Overseas) of the National Natural Science Foundation of China.

\appendix

\section{Proof of \texorpdfstring{\cref{thm:lower-bound}}{}}\label{sec:lower-bound}
In this section we present a proof of \cref{thm:lower-bound}, due to Alon, Hanneke, Holzman and Moran \cite{AHHM21}. The proof exploits a recent breakthrough in communication complexity and graph theory, namely \cref{thm:separation} below, which provides a near-optimal solution to the Alon--Saks--Seymour problem in graph theory (for background on this problem, see the survey by Bousquet, Lagoutte and Thomass\'e \cite{BLT14}). Let $G=(V,E)$ be a simple graph. Recall that the chromatic number of $G$, denoted by $\chi(G)$, is the minimum $k$ for which there exists a labelling $c\colon V\rightarrow [k]$ such that every edge $\{u, v\}\in E$ satisfies $c(u)\ne c(v)$. The {\bf biclique partition number} of $G$, denoted $\bp(G)$, is the minimum number of bicliques (i.e. complete bipartite graphs) needed to partition the edges of $G$. The following result follows from a recent line of breakthroughs by G\"o\"os \cite{Goo15}; G\"o\"os, Lovett, Meka, Watson and Zuckerman \cite{GLMWZ16}; Balodis,
Ben-David, G\"o\"os, Jain and Kothari \cite{BBGJK21}:

\begin{theorem}[{\cite{BBGJK21}}]\label{thm:separation}
For every positive integer $n$ there exists a graph $G$ with $\bp(G)=n$ and
\[
\chi(G)\ge n^{(\log n)^{1-o(1)}},
\]
where the term $o(1)$ tends to zero as $n$ goes to infinity.
\end{theorem}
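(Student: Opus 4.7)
The plan is to exploit the well-established bridge between graph parameters and communication/query complexity. In this dictionary, the biclique partition number of a graph constructed from a boolean function $f$ is controlled by a non-deterministic complexity of $f$, while the chromatic number of the same graph is controlled by its deterministic complexity. Consequently, any super-polynomial separation between these two complexities translates essentially directly into a separation between $\bp(G)$ and $\chi(G)$.

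First I would set up a suitable graph $G = G_f$ depending on a total boolean function (or search problem) $f$. The natural choice is a variant of the \emph{clique versus independent-set} graph: one vertex set encodes $1$-certificates of $f$, another encodes $0$-certificates, and edges encode consistency of pairs of certificates. For this construction, two classical facts are to be verified: (i) any non-deterministic protocol for $f$ of cost $N(f)$ induces a partition of the edge set of $G_f$ into at most $2^{O(N(f))}$ bicliques, yielding $\bp(G_f) \le 2^{O(N(f))}$; and (ii) any proper colouring of $G_f$ can be unfolded into a deterministic protocol for $f$, since each colour class is an independent set on which $f$ must be constant, giving $\chi(G_f) \ge 2^{\Omega(D(f))}$.

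Second, I would feed in the communication complexity separation. The breakthrough sequence G\"o\"os \cite{Goo15}, G\"o\"os--Lovett--Meka--Watson--Zuckerman \cite{GLMWZ16}, and Balodis--Ben-David--G\"o\"os--Jain--Kothari \cite{BBGJK21} exhibits a search problem $f$ with $D(f) \ge N(f)\cdot(\log N(f))^{1-o(1)}$. Choosing the parameters of $f$ so that $2^{c\cdot N(f)} = n$ (for the appropriate absolute constant $c$ coming from step~(i)), we obtain $\bp(G_f) \le n$ and
\[
\chi(G_f) \;\ge\; 2^{\Omega(D(f))} \;=\; 2^{\Omega\!\left(N(f)(\log N(f))^{1-o(1)}\right)} \;=\; n^{(\log n)^{1-o(1)}}.
\]
A tiny adjustment (for instance, adding a disjoint union of isolated edges, or slightly padding the certificate sets) can force $\bp(G)$ to equal $n$ exactly without disturbing the lower bound on $\chi(G)$.

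The main obstacle lies entirely in the second step: producing a function with the required complexity separation is the deep content of the cited papers, relying on intricate gadget compositions, query-to-communication lifting theorems, and subtle information-theoretic arguments developed over the course of that line of work. I would invoke this separation as a black box rather than attempt to reprove it. By contrast, the dictionary used in the first step is by now folklore from the study of the Alon--Saks--Seymour problem and requires only standard manipulations of bicliques, combinatorial rectangles, and protocol trees.
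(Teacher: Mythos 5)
The paper itself offers no proof of this statement: \cref{thm:separation} is imported as a black box from \cite{BBGJK21} (building on \cite{Goo15,GLMWZ16}) and is only \emph{used}, via \cref{lem:graph-to-class}, to prove \cref{thm:lower-bound}. So your sketch can only be measured against the cited literature, and there it has two genuine gaps. The first is that your dictionary is miscalibrated on both sides. A biclique \emph{partition} of the edge set corresponds to an \emph{unambiguous} nondeterministic object (a partition of the $1$-inputs into pairwise disjoint monochromatic rectangles, equivalently an unambiguous DNF); a plain nondeterministic protocol of cost $N(f)$ only yields a biclique \emph{cover}, so your step (i) with $\bp(G_f)\le 2^{O(N(f))}$ does not follow as stated --- this distinction is exactly the point of the title of \cite{BBGJK21}, and is why the problem was hard. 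On the other side, a proper colouring of the clique-versus-independent-set type graph does not ``unfold into a deterministic protocol'': a colour class is an independent set, and in this construction independent sets correspond to $0$-monochromatic rectangles, so a colouring with $C$ colours yields a co-nondeterministic cover of size $C$, not a protocol tree. The lower bound one can actually extract is therefore of the form $\chi(G)\ge 2^{\Omega(\text{co-nondeterministic complexity})}$, and the theorem of \cite{BBGJK21} is tailored precisely to this: an unambiguous $k$-DNF whose CNF width ($0$-certificate complexity) is $k^{2-o(1)}$, then lifted to communication.

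The second gap is quantitative. The separation you quote, $D(f)\ge N(f)\,(\log N(f))^{1-o(1)}$, is far too weak for the conclusion: with $n=2^{\Theta(N(f))}$ it would give only $\chi(G)\ge n^{(\log\log n)^{1-o(1)}}$, whereas the theorem asserts $\chi(G)\ge n^{(\log n)^{1-o(1)}}=2^{(\log n)^{2-o(1)}}$. Reaching this requires a \emph{near-quadratic} separation between the unambiguous measure (which becomes $\log n$ after the reduction) and the measure controlling $\log\chi$, i.e.\ a lower bound of order $k^{2-o(1)}$ against unambiguous complexity $k$ --- which is precisely what \cite{BBGJK21} prove, and why the resulting bound is near-optimal (recall $\chi(G)\le n^{O(\log n)}$ always holds when $\bp(G)=n$). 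So your final displayed chain of (in)equalities is arithmetically incorrect as written. The padding remark for forcing $\bp(G)=n$ exactly is fine and standard; the substance that must be fixed is the unambiguity requirement, the colouring-to-co-nondeterministic (not deterministic) translation, and the strength of the separation being invoked.
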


The following result allows us to use the graph $G$ promised by \cref{thm:separation}
to construct a partial concept class $\mH$ with small VC dimension and large covering number.

\begin{lemma}\label{lem:graph-to-class}
For every graph $G$ with $\bp(G)=n$, there exists a partial concept class $\mH \subseteq \{1,2,\star\}^{[n]}$ with $\VC(\mH) \le 1$ and $C(\mH) \ge \chi(G)$.
\end{lemma}

We remark that our proof also gives that $|\mH|=|V(G)|.$
Before proving \cref{lem:graph-to-class}, let us deduce \cref{thm:lower-bound} from it.

\begin{proof}[Proof of \cref{thm:lower-bound} assuming \cref{lem:graph-to-class}]
Let $G$ be the graph given by \cref{thm:separation}. Then we have $\bp(G)=n$ and $\chi(G)\ge n^{(\log n)^{1-o(1)}}$. By \cref{lem:graph-to-class}, there exists a partial concept class $\mH \subseteq \{1,2,\star\}^{[n]}$ with $\VC(\mH) \le 1$ and $C(\mH) \ge \chi(G) \ge n^{(\log n)^{1-o(1)}}$, proving \cref{thm:lower-bound}.
\end{proof}

To complete the proof of \cref{thm:lower-bound}, it remains to prove \cref{lem:graph-to-class}, the task we now pursue.

\begin{proof}[Proof of \cref{lem:graph-to-class}]
Suppose we have a partition of $E(G)$ as disjoint union of  $\mB(L_i, R_i)$ for $i\in [n]$, where $\mB(L_i
, R_i)$ denotes the edge set of the complete bipartite graph with parts $L_i$ and $R_i$. For each $v\in V(G)$, let $h_v$ be a partial function in $\{1,2,\star\}^{[n]}$ given by
\[
h_v(i)=
\begin{cases} 1 & \text { if } v \in L_i \\
2 & \text { if } v \in R_i\\
\star & \text { otherwise. }
\end{cases}
\]
Set $\mH=\{h_v\colon v\in V(G)\}$. We have to show that the partial concept class $\mH\subseteq \{1,2,\star\}^{[n]}$ satisfies $\VC(\mH)\le 1$ and $C(\mH) \ge \chi(G)$.

Suppose for the contrary that $\VC(\mH) \ge 2$. Then there must exist two distinct coordinates $i,j \in [n]$ such that $\{i,j\}$ is shattered by $\mH$.
In particular, we can find two vertices $u,v \in V(G)$ with
$(h_u(i),h_u(j))=(1,1)$ and $(h_v(i),h_v(j))=(2,2)$. From the definitions of $h_u$ and $h_v$, we get $u\in L_i \cap L_j$ and $v\in R_i \cap R_j$. Hence $\{u,v\}$ is covered by both $\mB(L_i,R_i)$ and $\mB(L_j,R_j)$, which contradicts the assumption that $\cup_{i=1}^n\mB(L_i,R_i)$ is an edge partition of $G$.

It remains to show that $C(\mH) \ge \chi(G)$. Indeed, from the definition of $C(\mH)$, there exists a family $\mC \subseteq \{1,2\}^{[n]}$ that satisfies
\begin{itemize}
    \item[(i)] $|\mC|=C(\mH)$;
    \item[(ii)] for each vertex $v\in V$ there is a total function $c_v\in \mC$ such that $h_v(i)\ne c_v(i)$ for all $i\in [n]$.
\end{itemize}
Assign to each vertex $v\in V$ the color $c_v \in \mC$. We claim that this is a proper coloring, and so $C(\mH)=|\mC| \ge \chi(G)$, as desired. Indeed, let $\{u,v\}$ be any edge in $G$. Since $\cup_{i=1}^n\mB(L_i,R_i)$ is an edge partition of $G$, $\{u,v\} \in \mB(L_i,R_i)$ for some $i\in [n]$. Let $u\in L_i$ and $v\in R_i$. By the definitions of $h_u$ and $h_v$, we thus obtain $h_u(i)=1$ and $h_v(i)=2$. It then follows from (ii) that $c_u(i)=2$ and $c_v(i)=1$. We conclude that $u$ and $v$ are assigned different colors, completing our proof.
\end{proof}

\section{Proof of \texorpdfstring{\cref{thm:HY-general}}{}}\label{sec:topo}
In this section we present Huang and Ye's proof \cite{HY09} of \cref{thm:HY-general}, with several simplifications. Let $(X,T)$ be a TDS. For a finite open cover $\mU$ of $X$, define $L(\mU)=\limsup\limits_{n\rightarrow+\infty} \frac{\log p^{*}_{X,\,\mU}(n)}{\log ^2(n+1)}$. Then one can restate \cref{thm:HY-general} as follows.

\begin{theorem}\label{thm:HY-restated}
If $(X,T)$ is a null TDS, then for every finite open cover $\mU$
\[
L(\mU)=O(1).
\]
\end{theorem}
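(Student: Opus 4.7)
The plan is to bound $p^*_{X,\mU}(n)$ by the covering number of an associated partial class of bounded $\VC$-dimension, and then to apply \cref{thm:comb}. Fix a finite open cover $\mU$ and, for each $n \ge 2$, a subset $S = \{s_1 < \cdots < s_n\} \subseteq \NN \cup \{0\}$ realising $p^*_{X,\mU}(n) = N(\bigvee_{i \in S} T^{-s_i}\mU)$. I focus first on the binary case $\mU = \{U_1, U_2\}$, which already carries the essential idea. To each $x \in X$ I would associate
\[
h_x(i) = \begin{cases} 2 & \text{if } T^{s_i} x \in U_1 \setminus U_2, \\ 1 & \text{if } T^{s_i} x \in U_2 \setminus U_1, \\ \star & \text{if } T^{s_i} x \in U_1 \cap U_2, \end{cases}
\]
and set $\mH_n = \{h_x : x \in X\} \subseteq \{1, 2, \star\}^{[n]}$.

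Two features of this construction then need to be verified. First, any net $\mF \subseteq \{1, 2\}^{[n]}$ of $\mH_n$ yields the subcover $\{\bigcap_i T^{-s_i} U_{f(i)} : f \in \mF\}$ of $\bigvee_i T^{-s_i}\mU$, since a direct case analysis shows that $f(i) \ne h_x(i)$ forces $T^{s_i} x \in U_{f(i)}$, and so $p^*_{X,\mU}(n) \le C(\mH_n)$. Second, if $\mH_n$ shatters a set $I \subseteq [n]$ with $|I| = d$, then every pattern $\sigma \in \{1, 2\}^I$ is witnessed by some $x_\sigma \in X$ with $T^{s_i} x_\sigma \in U_{3 - \sigma(i)} \setminus U_{\sigma(i)}$ for $i \in I$; such $x_\sigma$ lies in exactly one atom $\bigcap_{i \in I} T^{-s_i} U_{3-\sigma(i)}$ of $\bigvee_{i \in I} T^{-s_i}\mU$ and in no other, so any subcover must contain all $2^d$ of these atoms, giving $p^*_{X,\mU}(d) \ge 2^d$. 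Since $(X, T)$ is null, $h^*_{\topo}(T, \mU) = 0$ and $p^*_{X,\mU}(d) = 2^{o(d)}$; combined with the second feature this forces $\VC(\mH_n) < D$ for some constant $D = D(\mU)$ independent of $n$. Applying \cref{thm:comb} (with its $r = 2$) then yields
\[
C(\mH_n) \le \binom{n}{\le \log_2 \binom{n}{\le D}} \le \binom{n}{\le O(\log n)} \le n^{O(\log n)},
\]
and combining with the first feature gives $p^*_{X,\mU}(n) \le n^{O(\log n)}$, i.e., $L(\mU) = O(1)$.

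The main obstacle is extending the construction to an open cover $\mU = \{U_1, \ldots, U_r\}$ with $r \ge 3$: a point $T^{s_i} x$ can then miss two or more of the sets $U_k$, a situation that a single $\star$ cannot record. A natural remedy is to enlarge the coordinate space to $[n] \times [r]$ and to encode each membership $T^{s_i} x \in U_k$ by a $\{1, \star\}$-entry, producing a partial class in $\{1, 2, \star\}^{[n] \times [r]}$ to which \cref{thm:comb} (again with $r = 2$) still applies and delivers $n^{O(\log n)}$ on the covering number (since $nr = O(n)$). The delicate point is verifying that this enlarged partial class continues to upper-bound $p^*_{X,\mU}(n)$ and that shattering still forces exponential complexity; this is where the technical work of the general case is concentrated.
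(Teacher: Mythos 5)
Your binary case is correct and coincides with the paper's combinatorial core: the encoding $h_x$ via the (disjoint) complements $U_1^c,U_2^c$, the equivalence between nets of $\mH_n$ and subcovers of $\bigvee_i T^{-s_i}\mU$, the observation that a shattered set of size $d$ forces $p^*_{X,\,\mU}(d)\ge 2^d$ and hence bounded VC-dimension under nullness, and the application of \cref{thm:comb} with $r=2$. But the theorem is stated for \emph{arbitrary} finite open covers, and your treatment of $r\ge 3$ is only a sketch that you yourself flag as unverified; this is a genuine gap, and the proposed remedy does not appear to work. If you encode memberships $T^{s_i}x\in U_k$ by one-sided $\{1,\star\}$ entries over the coordinate set $[n]\times[r]$, then the constant function $f\equiv 2$ already avoids every partial function in the class, so the covering number collapses to $1$ and cannot upper-bound $p^*_{X,\,\mU}(n)$; more structurally, a net element over $[n]\times[r]$ selects a \emph{subset} of indices (possibly empty) at each time $i$ rather than a single $U_{f(i)}$, so it does not yield a subcover of $\bigvee_i T^{-s_i}\mU$. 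The two-symbol-plus-star trick really uses that for a two-element cover the complements $V_1^c,V_2^c$ are disjoint, so that "which set is missed" is a well-defined single symbol; for $r\ge 3$ a point may miss several $U_k$'s and no purely notational enlargement restores this.

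The paper resolves exactly this difficulty by a topological, not combinatorial, reduction: \cref{lem:Blanchard} (following Blanchard) shows that if $L(\mU)=+\infty$ for some finite open cover, then there is a \emph{two-element} open cover $\mV=\{V_1,V_2\}$ with $L(\mV)=+\infty$, after which your binary argument (run in contrapositive form) applies. That lemma is where the real work for general covers lies: one builds a nested sequence of closed sets $A_n\subseteq U_1^c$ with $\diam(A_n)\to 0$ and $L(\{A_n^c,U_2,\ldots,U_r\})=+\infty$, using compactness and the monotonicity/subadditivity of $L$ (\cref{prop:entropy-properties}), and then extracts the 2-cover $\{A_n^c,U_\ell\}$ around the limit point. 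Note that the naive alternative of dominating $\mU$ by the two-element covers $\{U_k,\bigcup_{j\ne k}U_j\}$ fails, since their join need not refine $\mU$, so subadditivity alone cannot replace this step. To complete your proof you would need to supply this reduction (or a genuinely new argument handling general covers directly), not merely an enlarged coordinate space.
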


Throughout the section, $A^c$ denotes the complement $X\setminus A$ of $A$.

\begin{proof}[Proof of \cref{thm:HY-restated}]
Suppose for the contrary that $L(\mU)=\infty$. By \cref{lem:Blanchard} below, there is an open cover $\mV=\{V_1,V_2\}$ of size $2$ such that  $L(\mathcal{V})=\infty$. Then, for each $d\in \NN$, there exists a finite set $S \subset \NN\cup \{0\}$ with $N\big(\bigvee_{i\in S} T^{-i} \mV\big)> (|S|+1)^{4d\log (|S|+1)}$. For each $x\in X$,
let $h_x$ be a partial function in $\{1,2,\star\}^{S}$ defined as
\[
h_x(i)=
\begin{cases}
1 & \text { if } x\in T^{-i}(V_1^c) \\
2 & \text { if } x \in T^{-i}(V_2^c) \\
\star & \text { otherwise}.
\end{cases}
\]
Since $\{V_1,V_2\}$ is a cover of $X$, $V_1^c$ and $V_2^c$ are disjoint, and hence $h_x$ is well-defined. Consider the partial concept class $\mH:=\{h_x\colon x\in X\}$.

\begin{claim}\label{claim:topo-comb}
$C(\mH)=N\big(\bigvee_{i\in S} T^{-i} \mV\big)$.
\end{claim}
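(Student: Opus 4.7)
The plan is to translate the covering-number condition on the open cover $\bigvee_{i\in S}T^{-i}\mV$ into the net condition that defines $C(\mH)$, via an explicit bijection between subcovers and nets.

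The key calculation is to unwrap what ``$f$ is a total function with $h_x(i) \in \{1,2,\star\}\setminus\{f(i)\}$ for every $i\in S$'' means. First, I would verify that $h_x$ is well-defined: because $\{V_1,V_2\}$ covers $X$, one has $V_1^c \cap V_2^c = \emptyset$, so for each $i\in S$ the sets $T^{-i}V_1^c$ and $T^{-i}V_2^c$ are disjoint. Next, fix $f\in \{1,2\}^S$ and $x\in X$. Checking cases, the condition $h_x(i)\ne f(i)$ says: if $f(i)=1$, then $x\notin T^{-i}V_1^c$, i.e.\ $x\in T^{-i}V_1$; if $f(i)=2$, then $x\in T^{-i}V_2$. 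In both cases this is exactly $x\in T^{-i}V_{f(i)}$. Hence the condition $h_x(i)\ne f(i)$ for all $i\in S$ is equivalent to $x\in \bigcap_{i\in S}T^{-i}V_{f(i)}$.

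From this equivalence I would conclude the claim as follows. A total class $\mF\subseteq \{1,2\}^S$ is a net of $\mH$ if and only if for every $x\in X$ there is some $f\in \mF$ with $x\in \bigcap_{i\in S}T^{-i}V_{f(i)}$, which is exactly the statement that the collection $\{\bigcap_{i\in S}T^{-i}V_{f(i)}\colon f\in \mF\}$ is a subcover of $\bigvee_{i\in S}T^{-i}\mV$. Taking minima on both sides gives $C(\mH)=N\big(\bigvee_{i\in S}T^{-i}\mV\big)$. There is essentially no obstacle beyond carefully matching the two definitions; the step that requires the most attention is the case check establishing $h_x(i)\ne f(i) \iff x\in T^{-i}V_{f(i)}$, since this is what allows the net/subcover identification to go through.
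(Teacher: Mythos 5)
Your proposal is correct and follows essentially the same route as the paper: unwind the definition of $h_x$ to show that $h_x(i)\ne f(i)$ for all $i\in S$ is equivalent to $x\in\bigcap_{i\in S}T^{-i}V_{f(i)}$, hence nets of $\mH$ correspond to subcovers of $\bigvee_{i\in S}T^{-i}\mV$. The only cosmetic caveat is that the correspondence need not be a literal bijection (distinct $f$'s can yield the same intersection set), so the final step is cleanest as two inequalities---a minimum net yields a subcover of no larger size and a minimum subcover yields a net of no larger size---which is exactly how the paper phrases it.
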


\begin{proof}
Let $\mF \subseteq \{1,2\}^S$ be a total class. We can infer from the definitions of $\mH$ and $h_x$ that
\begin{align*}
\text{$\mF$ is a net of $\mH$} \quad &\Longleftrightarrow \quad \text{for each $h_x\in \mH$ there is $f\in \mF$ such that $h_x(i) \ne f(i)$ for all $i\in S$} \\
&\Longleftrightarrow \quad \text{for each $x\in X$ there is $f\in \mF$ such that $x\notin T^{-i}(V_{f(i)}^c)$ for all $i\in S$}\\
&\Longleftrightarrow \quad \text{for each $x\in X$ there is $f\in \mF$ such that $x\in T^{-i}V_{f(i)}$ for all $i\in S$}\\
&\Longleftrightarrow \quad \text{for each $x\in X$ there is $f\in \mF$ such that $x\in \bigcap_{i\in S}T^{-i}V_{f(i)}$}\\
&\Longleftrightarrow \quad \Big\{\bigcap_{i\in S}T^{-i}V_{f(i)}\colon f\in \mF\Big\} \text{ is a cover of $X$.}
\end{align*}
Letting $\mF$ be a net of minimum size of $\mH$, this yields the lower bound
\[
C(\mH)=|\mF|\ge |\big\{\bigcap_{i\in S}T^{-i}V_{f(i)}\colon f\in \mF\big\}| \ge N\big(\bigvee_{i\in S} T^{-i} \mV\big).
\]
For the upper bound, let $\mW$ be a minimum subcover of $\bigvee_{i\in S}T^{-i}\mV$. Then $\mW$ can be written as $\mW=\big\{\bigcap_{i\in S}T^{-i}V_{f(i)}\colon f\in \mF\big\}$, where $\mF \subseteq \{1,2\}^S$ is a total class with $|\mF|=|\mW|$. As $\mW$ is a cover of $X$, we find that $\mF$ is a net of $\mH$. Thus $C(\mH) \le |\mF|=|\mW|=N\big(\bigvee_{i\in S} T^{-i} \mV\big)$. We are done.
\end{proof}

From \cref{claim:topo-comb} we obtain $C(\mH)=N\big(\bigvee_{i\in S} T^{-i} \mV\big)\ge (|S|+1)^{4d\log (|S|+1)}$. Thus, by
the remark after \cref{thm:comb}, there exists a size-$d$ subset $W\subseteq S$ with $\restr{\mH}{W} \supseteq\{1,2\}^{W}$. Let $x\in X$ be an element such that $\restr{h_x}{W}$ lies in $\{1,2\}^W$. Suppose $x$ is contained in $\bigcap_{i\in W} T^{-i} V_{f(i)}$ for some $f\in \{1,2\}^W$, then we must have $h_x(i)\ne f(i)$ for all $i\in W$, and so $f$ is uniquely determined by $\restr{h_x}{W}$. Therefore, we have
\[
N\big(\bigvee_{i\in W}T^{-i}\mV\big) \ge |\restr{\mH}{W}\cap \{1,2\}^W|=2^{|W|}.
\]
Letting $|W|=d \rightarrow \infty$ yields $h^*_{\topo}(T,\mV) \ge 1>0$, a contradiction.
\end{proof}

The rest of this section is devoted to establishing the following lemma that was used in the proof \cref{thm:HY-restated}.

\begin{lemma}\label{lem:Blanchard}
Let $(X,T)$ be a TDS. Suppose that $\mU$ is a finite open cover of $X$ with
$L(\mU)=+\infty$. Then there is an open cover $\mathcal{V}=\{V_1,V_2\}$ of size two with $L(\mV)=+\infty$.
\end{lemma}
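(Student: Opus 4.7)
The plan is to reduce the cover size to two via a shrinking argument, replacing $\mU$ by a joint of two-element open covers that collectively dominate its maximal pattern complexity. Write $\mU = \{U_1, \ldots, U_k\}$, and note that $k \ge 2$ (otherwise $p^*_{X,\,\mU} \equiv 1$, contradicting $L(\mU) = +\infty$). Since $X$ is compact metrisable and hence normal, the classical Shrinking Lemma furnishes an open cover $\mU' = \{U'_1, \ldots, U'_k\}$ with $\overline{U'_i} \subseteq U_i$ for every $i \in [k]$. For each such $i$, I would then introduce the two-element open cover
\[
\mV^{(i)} := \{U_i,\ X \setminus \overline{U'_i}\},
\]
which indeed covers $X$ because $\overline{U'_i} \subseteq U_i$.

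The first key step is to verify that the joint $\mW := \mV^{(1)} \vee \cdots \vee \mV^{(k)}$ refines $\mU$. Any nonempty element of $\mW$ has the form $A = \bigcap_{i=1}^{k} A_i$ with $A_i \in \{U_i,\, X \setminus \overline{U'_i}\}$; if every $A_i$ equalled $X \setminus \overline{U'_i}$ then $A$ would be disjoint from $\bigcup_i U'_i = X$, contradicting $A \ne \emptyset$. Hence some $A_{i_0}$ equals $U_{i_0}$, forcing $A \subseteq U_{i_0} \in \mU$. Since $N(\cdot)$ is monotone under refinement, this gives $p^*_{X,\,\mU}(n) \le p^*_{X,\,\mW}(n)$ for every $n$, and consequently $L(\mU) \le L(\mW)$.

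For the second step I would invoke the elementary submultiplicativity $N(\mA \vee \mB) \le N(\mA) \cdot N(\mB)$, iterated $k-1$ times, to obtain $p^*_{X,\,\mW}(n) \le \prod_{i=1}^{k} p^*_{X,\,\mV^{(i)}}(n)$. Taking logarithms, dividing by $\log^2(n+1)$, and applying $\limsup_n (a_n + b_n) \le \limsup_n a_n + \limsup_n b_n$ yield $L(\mW) \le \sum_{i=1}^{k} L(\mV^{(i)})$. Chaining with the previous inequality,
\[
+\infty \;=\; L(\mU) \;\le\; L(\mW) \;\le\; \sum_{i=1}^{k} L(\mV^{(i)}),
\]
so at least one $L(\mV^{(i)}) = +\infty$, and the corresponding $\mV^{(i)}$ is the desired two-element cover.

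The main conceptual obstacle is recognising that the shrinking step is essential: the naive attempt $\mV^{(i)} = \{U_i, \bigcup_{j \ne i} U_j\}$ fails because the ``all complements'' cell $\bigcap_i \bigcup_{j \ne i} U_j$ can be nonempty yet not contained in any single $U_j$, so the joint need not refine $\mU$. Replacing each $U_i$ by a strictly smaller $\overline{U'_i}$ in the complemented coordinate forces the analogous cell to be empty and makes the refinement argument go through.
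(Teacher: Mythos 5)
Your proof is correct, and it takes a genuinely different route from the paper. The paper follows Blanchard's argument: it inductively builds a nested sequence of closed sets $A_0\supseteq A_1\supseteq\cdots$ inside $U_1^c$ with $\diam(A_n)\le 2^{-n}$ and $L(\{A_n^c,U_2,\ldots,U_k\})=+\infty$ at every stage (each step covering $A_n$ by small balls and using the same subadditivity/refinement properties of $L$), then uses compactness to shrink $\bigcap_n A_n$ to a single point $x$, picks $U_\ell\ni x$, and takes $\mV=\{A_n^c,U_\ell\}$ for large $n$. You instead make a one-shot reduction: shrink the cover via normality to $\mU'$ with $\overline{U'_i}\subseteq U_i$, form the $k$ two-element covers $\mV^{(i)}=\{U_i,\,X\setminus\overline{U'_i}\}$, observe that their joint refines $\mU$ (the ``all complements'' cell being empty precisely because $\bigcup_i U'_i=X$ — your diagnosis of why the naive choice $\{U_i,\bigcup_{j\ne i}U_j\}$ fails is exactly right), and then apply the same monotonicity and subadditivity of $L$ once to force $L(\mV^{(i)})=+\infty$ for some $i$. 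Both arguments rest on the same two properties of $L$ (the paper's auxiliary proposition); yours is shorter, avoids the infinite induction and the compactness/diameter argument, and needs only normality of $X$. What the paper's heavier construction buys is extra structure not required by the statement: its two-set cover has one member taken from the original cover and the other with complement a closed set of arbitrarily small diameter, which is the kind of control Blanchard's original applications exploit but which plays no role in the use of this lemma here. Two cosmetic points: the empty cells of the joint are trivially contained in any $U_j$, so refinement holds for all cells, not just nonempty ones; and the selected $\mV^{(i)}$ automatically consists of two distinct sets, since a cover containing $X$ itself would have $L=0$.
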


As in \cite{HY09}, we follow the arguments of Blanchard \cite{Bla93}. We shall make use of basic properties of the function $L$.

\begin{proposition}\label{prop:entropy-properties}
Let $(X,T)$ be a TDS. Then the following properties hold.
\begin{itemize}
    \item[\rm(i)] ({\bf Monotone}) If $\mV$ is a refinement of $\mU$, then $L(\mU) \leq L(\mV)$.
    \item[\rm(ii)] ({\bf Subadditive}) If $\mU,\mV$ are open covers of $X$, then $L(\mU\vee\mV) \le L(\mU)+L(\mV)$.
\end{itemize}

\end{proposition}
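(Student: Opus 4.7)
Both items reduce to inequalities at the level of the functions $p^*_{X,\,\mU}(n)$, which then survive the $\limsup$. Two elementary facts about the subcover number $N$ will be used: (a) if an open cover $\mB$ refines an open cover $\mA$, then $N(\mA)\le N(\mB)$, since any subcover $\{B_1,\ldots,B_k\}\subseteq \mB$ can be replaced by a subcover $\{A_1,\ldots,A_k\}\subseteq \mA$ of the same size by choosing, for each $B_j$, some $A_j\in \mA$ with $B_j\subseteq A_j$; and (b) $N(\mA\vee\mB)\le N(\mA)N(\mB)$, since the intersections of a minimum subcover of $\mA$ with a minimum subcover of $\mB$ form a subcover of $\mA\vee\mB$. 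I will also use the straightforward identity $T^{-i}(\mU\vee\mV)=T^{-i}\mU\vee T^{-i}\mV$, together with associativity and commutativity of the join.

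For (i), let $\mV$ be a refinement of $\mU$. For each $i\in\NN\cup\{0\}$ and each $B\in T^{-i}\mV$, say $B=T^{-i}V$ with $V\subseteq U$ for some $U\in\mU$, we have $B\subseteq T^{-i}U\in T^{-i}\mU$, so $T^{-i}\mV$ refines $T^{-i}\mU$. Since refinement is preserved under joins, for every finite $S\subseteq \NN\cup\{0\}$ the cover $\bigvee_{i\in S}T^{-i}\mV$ refines $\bigvee_{i\in S}T^{-i}\mU$. Applying (a) and taking the maximum over $|S|=n$ yields $p^*_{X,\,\mU}(n)\le p^*_{X,\,\mV}(n)$. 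Dividing by $\log^2(n+1)$ and passing to $\limsup$ gives $L(\mU)\le L(\mV)$.

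For (ii), using the identity $T^{-i}(\mU\vee\mV)=T^{-i}\mU\vee T^{-i}\mV$ and reorganising the join, one gets, for each finite $S$,
\[
\bigvee_{i\in S}T^{-i}(\mU\vee\mV)\;=\;\Bigl(\bigvee_{i\in S}T^{-i}\mU\Bigr)\vee\Bigl(\bigvee_{i\in S}T^{-i}\mV\Bigr).
\]
By (b),
\[
N\Bigl(\bigvee_{i\in S}T^{-i}(\mU\vee\mV)\Bigr)\le N\Bigl(\bigvee_{i\in S}T^{-i}\mU\Bigr)\cdot N\Bigl(\bigvee_{i\in S}T^{-i}\mV\Bigr),
\]
so, taking the maximum over $|S|=n$, $p^*_{X,\,\mU\vee\mV}(n)\le p^*_{X,\,\mU}(n)\cdot p^*_{X,\,\mV}(n)$. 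Taking logarithms, dividing by $\log^2(n+1)$, and using subadditivity of $\limsup$ for nonnegative sequences yields $L(\mU\vee\mV)\le L(\mU)+L(\mV)$.

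Neither step hides any real obstacle; the only mild care needed is checking that $T^{-i}$ commutes with the join operation and that the join is associative, both of which are immediate from $T^{-i}(A\cap B)=T^{-i}A\cap T^{-i}B$.
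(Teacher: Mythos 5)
Your proof is correct and follows essentially the same route as the paper: for (i), pass the refinement through $T^{-i}$ and the join to compare $N$, hence $p^*$, hence $L$; for (ii), use $T^{-i}(\mU\vee\mV)=T^{-i}\mU\vee T^{-i}\mV$, the bound $N(\mA\vee\mB)\le N(\mA)N(\mB)$, and take maxima, logs, and $\limsup$. The only difference is that you spell out the two elementary facts about subcover numbers that the paper uses implicitly, which is harmless.
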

\begin{proof} For (i), consider any set $S$ of nonnegative integers. Since $\mV$ is a refinement of $\mU$, $\bigvee_{i\in S}T^{-i}\mV$ is a refinement of $\bigvee_{i\in S}T^{-i}\mU$, and so $N\big(\bigvee_{i\in S}T^{-i}\mU\big) \le  N\big(\bigvee_{i\in S}T^{-i}\mV\big)$. It follows that $p^*_{X,\, \mU}(n) \le p^*_{X,\, \mV}(n)$ for every $n\in \NN$, which implies $L(\mU)\le L(\mV)$.

For every $n\in \NN$ we have
\begin{align*}
p^*_{X,\,\mU\vee \mV}(n)=\max_{S} N\Big(\bigvee_{i\in S} T^{-i} (\mU\vee \mV)\Big)&=\max_{S} N\Big(\bigvee_{i\in S} (T^{-i}\mU \vee T^{-i}\mV)\Big)\\
&\leq \max_{S} N\Big(\bigvee_{i\in S} T^{-i}\mU \Big)\cdot N\Big(\bigvee_{i\in S} T^{-i}\mV \Big)\\
&\le \max_{S}N\Big(\bigvee_{i\in S} T^{-i}\mU \Big)\cdot\max_{T}N\Big(\bigvee_{i\in T} T^{-i}\mV \Big)\\
&=p^*_{X,\,\mU}(n)\cdot p^*_{X,\,\mV}(n),
\end{align*}
where the maximums are taken over all size-$n$ subsets $S,T \subset \NN \cup \{0\}$.
Taking the logarithm and then dividing by $\log^2(n+1)$, we obtain $L(\mU\vee \mV) \le L(\mU)+L(\mV)$.
\end{proof}

\begin{proof}[Proof of \cref{lem:Blanchard}]
Let $\mU=\{U_1,\ldots,U_k\}$. We first observe that $U_1\neq X$. Suppose otherwise that $U_1=X$, then for every subset $S \subseteq \NN\cup \{0\}$ we have $X\in \bigvee_{i\in S} T^{-i} \mU$, and so the open cover $\bigvee_{i\in S} T^{-i} \mU$ has a subcover of size $1$, which implies $L(\mU)=0$, a contradiction.

Given a subset $A\subseteq X$, we shall use $\diam(A)$ to denote the diameter of $A$. We now inductively construct a sequence of closed sets $(A_n)_{n\ge 0}$ with the following three properties
\begin{itemize}
    \item[\rm (P1)] $U_1^c=A_0 \supseteq A_1 \cdots$;
    \item[\rm (P2)] $\diam(A_n) \le 2^{-n}$ for every $n\ge 1$;
    \item[\rm (P3)] $L(\mU_n)=+\infty$ for every $n\geq 0$, where $\mU_n:=\{A_n^c,U_2,\ldots,U_k\}$.
\end{itemize}
Clearly, $A_0=U_1^c$ is a closed set that satisfies (P1)--(P3). Suppose that we have already constructed $A_n$.
 Since $A_n$ is a closed subset of the compact set $X$, $A_n$ is also compact, and so we can cover $A_n$ by a finite number of closed balls $B_1,\ldots,B_{\ell}$ of radius $2^{-n-2}$. For $1\le i \le \ell$, let $A_{n+1,i}=A_n\cap B_i$ and $\mU_{n+1,i}=\{A_{n+1,i}^c,U_2,\ldots,U_k\}$. Since $\mU_n=\{A_n^c,U_2,\ldots,U_k\}$ is an open cover of $X$ and $A_{n+1,i}^c$ is an open set containing $A_n^c$, we find that $\mU_{n+1,i}$ is also an open cover of $X$. The definition of $A_{n+1,i}$ gives $A_{n+1,i}^c=(A_n\cap B_i)^c=A_n^c\cup B_i^c$. Hence
 \[
     \bigcap_{i=1}^{\ell} A_{n+1,i}^c = \bigcap_{i=1}^{\ell} \big(A_n^c\cup B_i^c\big)=A_n^c\cup \big(A_n\cap \bigcap_{i=1}^{\ell}B_i^c\big)=A_n^c\cup \emptyset=A_n^c,
 \]
where the third identity holds since $A_n$ is covered by $B_1,\ldots,B_{\ell}$. It follows that every set in $\bigvee_{i=1}^{\ell} \mU_{n+1,i}$ is contained in some set in $\mU_n=\{A_n^c,U_2,\ldots,U_k\}$. In other words, the open cover $\bigvee_{i=1}^{\ell} \mU_{n+1,i}$ is a refinement of $\mU_n$. Using \cref{prop:entropy-properties}, we thus obtain
\[
+\infty=L(\mU_n
)\leq L\big(\bigvee_{i=1}^{\ell} \mU_{n+1,i}\big)\leq \sum_{i=1}^{\ell} L(\mU_{n+1,i}).
\]
So $L(\mU_{n+1,i})=+\infty$ for some $i\in [\ell]$.
Set $A_{n+1}=A_{n+1,i}$. As $A_{n+1}=A_n\cap B_i$, $A_{n+1}$ is a closed subset of $A_n$ of diameter $\diam(A_{n+1})\leq \diam(B_i) \leq 2\cdot 2^{-n-2}=2^{-n-1}$. Therefore, $A_{n+1}$ has the desired properties.

From (P3) and the observation at the beginning of the proof, we see that $A_n^c\neq X$, and so $A_n\neq \emptyset$. This, together with (P1), (P3) and the compactness of $X$, yields $\bigcap_{n\ge 0}A_n=\{x\}$ for some $x\in X$. Because $x\in A_0=U_1^c$ and $\mU=\{U_1,\ldots,U_k\}$ is a cover of $X$, there is $\ell \in \{2,\ldots,k\}$ with $x\in U_{\ell}$. Since $U_{\ell}$ is open, there exists $\eps>0$ such that $U_{\ell}$ contains an open ball $B(x,\eps)$ centered at $x$ of radius $\eps>0$. On the other hand, since $\bigcap_{n\geq 0}A_n=\{x\}$ and $\lim_{n\rightarrow \infty}\diam(A_n)=0$, one has $A_n \subseteq B(x,\eps)$ for $n$ sufficient large. For such an $n$, let $V_1=A_n^c$ and $V_2=U_{\ell}$. Since $V_1\cup V_2 \supseteq B(x,\eps)^c\cup B(x,\eps)=X$,  $\mathcal{V}=\{V_1,V_2\}$ is a subcover of $\mU_n$. Finally, we have
\[
L(\mV)\geq L(\mU_n)=+\infty.
\]
This completes our proof.
\end{proof}

\end{document}